\documentclass[lettersize,onecolumn]{IEEEtran}
\usepackage{amsmath,amsfonts}
\usepackage{algorithmic}
\usepackage{algorithm}
\usepackage{array}
\usepackage{amsthm}
\usepackage[caption=false,font=normalsize,labelfont=sf,textfont=sf]{subfig}
\usepackage{textcomp}
\usepackage{stfloats}
\usepackage{url}
\usepackage{verbatim}
\usepackage{graphicx}
\usepackage{cite}
\newtheorem{theorem}{Theorem}
\newtheorem{corollary}{Corollary}
\newtheorem{lemma}{Lemma}
\newtheorem{definition}{Definition}
\newtheorem{proposition}{Proposition}
\usepackage{amssymb}
\begin{document}

\title{A construction method for 2-phase and 4-phase Golay Complementary Sequences}

\author{Xu Wang
\thanks{The author is with School of Electronics \& Information Technology, Nanjing University of Information Science and Technology, Nanjing 210044,China(email:003796@nuist.edu.cn).}
\thanks{Manuscript received , 2026; revised , 2026.}}

\markboth{Journal of \LaTeX\ Class Files,~Vol.~1, No.~2, December~2023}%
{Shell \MakeLowercase{\textit{et al.}}: A Sample Article Using IEEEtran.cls for IEEE Journals}

\IEEEpubid{0000--0000~\copyright~2023 IEEE}

\maketitle

\begin{abstract}
Motivated by the recent constructions of Golay Complementary Array (GCA), we extend the "Three-stage construction" proposed by Fiedler and Jedwab to a more general form. All the known 2-phase Golay Complementary Sequences (GCSs) of length $2^a10^b26^c,a,b,c\ge 0$ and 4-phase GCSs of length $2^{a+u}3^b5^c11^d13^e$ where $a,b,c,d,e,u\ge0,b+c+d+e\le a+2u+1,u\le c+e$ can be constructed in a uniform method from five 2-phase seeds of length 2,10,10,20,26 and five 4-phase seeds of length 3,5,8,11,13. Furthermore, additional 2-phase and 4-phase GCSs of many lengths are produced and a closed form for numbers of 2-phase and 4-phase GCSs is given. 
\end{abstract}

\begin{IEEEkeywords}
Golay Sequences, 4-phase, Golay number, Complementary Sequences.
\end{IEEEkeywords}

\section{Introduction}
\IEEEPARstart{G}{olay} complementary sequence(GCS) was originally introduced by Golay \cite{golay1951} in 1951 to solve the optical problem of infrared multislit spectrometry. Owing to their autocorrelation property, GCSs have found wide applications, including reduction of the peak-to-average power ratio(PAPR) in Orthogonal Frequency Division Multiplexing(OFDM) systems\cite{Davis}, omnidirectional precoding for antenna arrays\cite{Du}, surface wave suppression, and the construction of Hadamard matrix\cite{Turyn}.

The study of Golay focuses on the GCS with elements of {+1,-1}. Such sequences are referred to as 2-phase GCSs. Golay\cite{golay1961} proved that 2-phase GCS must have even length and demonstrated the existence of two pairs of length 10 and one pair of length 26 GCS. He also gave a construction of lengths 2n and 2mn from existing pairs of length m and n. In 1974, Turyn\cite{Turyn} proposed a construction for length mn. With Turyn's work, it is known that 2-phase GCSs exist for lengths $2^a10^b26^c$, where $a,b,c\ge 0$, known as 2-phase Golay numbers. In 1990, Eliahou\cite{Eliahou} proved that 2-phase GCSs do not exist for lengths divisible by a prime congruent to 3 $ \pmod{4}$. In 2003, Borwein and Ferguson\cite{Borwein} presented a complete computer search for all lengths less than 100, and the results showed that for $n\le100$, the only possible lengths for 2-phase GCSs are $2^a10^b26^c$, where $a,b,c\ge 0$. It is also conjectured that the 2-phase Golay numbers are the only possible lengths for 2-phase GCSs. 

To cover more lengths, researchers began to study GCSs with elements of $\{1,i,-1,-i\}$, which are known as 4-phase GCSs, complex GCSs, quaternary GCSs, or multiphase GCSs in some literature. Early work on 4-phase GCSs can be traced back to 1978, when Sivaswamy \cite{Sivaswamy} gave an example of a length-3 4-phase GCS and a construction method for length $n2^k$, where n is the length of a 4-phase GCS. In 1980, Frank\cite{Frank} further gave examples of length-5 and length-13 4-phase GCSs, and also proposed construction for length 2mn, where m and n are lengths of a 4-phase GCS. In 2002, Craigen\cite{Craigen} reported a construction that combines  two 4-phase GCSs of lengths $g_1$ and $g_2$ with a 2-phase GCS of length g to obtain a GCS of length $gg_1g_2$. They also performed a computer search for lengths $n\le 19$ and 21. As a result, 4-phase GCSs are known to exist for lengths $2^{a+u}3^b5^c11^d13^e$, where $a,b,c,d,e,u\ge0,b+c+d+e\le a+2u+1,u\le c+e$.

The construction results above establish the existence of 4-phase GCSs for certain lengths, but they do not cover all existing 4-phase GCSs. In 1999, David and Jedwab\cite{Davis} discovered an important connection between GCS and Reed-muller codes, and gave an explicit algebraic normal form for $m!2^{h(m+2)}$ ordered GCSs of length $2^m$ over $\mathbb{Z}_{2^h}$. These sequences are called standard GCSs in later works. In 2003, Borwein and Ferguson\cite{Borwein} gave a construction of $2^{k+6}(k+1)(k+1)!$ 2-phase GCSs at length $2^kn$, where n is a primitive Golay seed pair of length 10,10,20 and 26. In 2005, Li and Chu\cite{Li} unexpectedly found 1024 nonstandard 4-phase GCSs of length 16 by computer search. This phenomenon was explained by Fiedler and Jedwab\cite{Fiedler2006} through the "crossover" of one class of length-8 4-phase GCSs. In 2008, Fiedler,Jedwab and Parker\cite{Fiedler20081}\cite{Fiedler2008} proposed the important viewpoint that GCSs are naturally projections of multi-dimensional GCAs, and introduced the "3-stage construction" framework for constructing GCSs. Using this framework, they determined the number of 4-phase GCS of length $2^m$. In 2011, Gibson and Jedwab\cite{Gibson} applied the "3-stage construction" with 4-phase GCS seeds of lengths 3,5,11,13. They obtained the number of 4-phase GCSs of length $s^c2^m$, where $m\ge1,1\le c\le m+1,s\in \{3,5,11,13\}$. They also used a "sum-difference" construction to explain the 4-phase GCS of length 10,20 and 26. 

Recently, Du and Jiang\cite{Du} proposed a more general construction of 4-phase GCAs. While their work focuses on the possible sizes of 4-phase GCAs, we focus on constructing as many GCSs as possible for all admissible lengths. Our approach is based on the "3-stage construction" framework used in many previous works. The construction proceeds in three steps: 1. construct GCAs from GCSs 2.apply equivalence operations to obtain more GCAs 3.project the GCAs to GCSs. Our method has two improvements: 1. A more general GCA construction is proposed, 2.Additional equivalence operation is considered during the construction. These improvements bridge the gap between known construction methods and the known admissible lengths for 2-phase and 4-phase GCSs. We also derive closed forms for the numbers of 2-phase and 4-phase GCSs, which answers the question left in the last part of \cite{Borwein} and part of the first question in \cite{Fiedler2008}.

The paper is organized as follows:
In Section II we introduce the notation and definitions used in the paper. In Section III, we review the Three-Stage construction framework, and extend the method to a more general form, the equivalence operations is also discussed. In Section IV, we give a general construction for the 2-phase GCSs of length $2^a10^b26^c$ and 4-phase GCSs of length $2^{a+u}3^b5^c11^d13^e,$. The construction not only explains all the known 2-phase and 4-phase GCSs, but also gives a lower bound of the number for GCSs on the given lengths. In Section V, a conclusion is given.

\section{Notations and Definitions}
Our definitions follow \cite{Fiedler2008} and \cite{Du}. Let $\xi$ denote $e^{2\pi i/H}$, where $H$ represents an even integer throughout this paper. We call the finite set $S=\{1,\xi,\xi^2,...,\xi^{H-1}\}$ the $H$-phase alphabet. A sequence $\mathcal{A}=(a_0,a_1,...a_{n-1})$, where each $a_i\in S$, is called a $H$-phase sequence of length n. For example, the 2-phase alphabet consists of $\{1,-1\}$, and 4-phase alphabet consists of $\{1,i,-1,-i\}$, where $i=\sqrt{-1}$. 

The aperiodic autocorrelation function of a length-s sequence $\mathcal{A}$=(A[j]) is defined as
\begin{equation}   \mathcal{C}_A(u)=\sum_{i}A[i]\overline{A[i+u]}\quad for\quad integers\quad u
\end{equation}
where the bar represents complex conjugation.

\begin{definition}
Let $\mathcal{A}$ and $\mathcal{B}$ be a pair of $H$-phase sequences of length $s$.  If
\begin{equation}
    \mathcal{C}_A(u)+\mathcal{C}_B(u)=0, \quad for\quad all(u)\neq(0,...,0)
\end{equation}
then $\mathcal{A}$ and $\mathcal{B}$ are called a Golay complementary sequence(GCS).
\end{definition}

Let an array of size $s_1\times\cdot\cdot\cdot\times s_r$ be an r-dimensional matrix $\mathcal{A}$=(A[$i_1,...,i_r$]) of complex-valued entries, where $i_1,...,i_r$ are integers. And A[$i_1,...,i_r$]=0, for any $k\in{1,2,..,r}$, $i_k<0$ or $i_k\ge s_k$. A sequence can be viewed as a 1-dimensional array.

The aperiodic autocorrelation function of an $s_1\times\cdot\cdot\cdot\times s_r$ array $\mathcal{A}$=(A[$i_1,...,i_r$]) is defined as 
\begin{equation}    \mathcal{C}_A(u_1,...,u_r)=\sum_{i_1}\cdot\cdot\cdot\sum_{i_r}A[i_1,...,i_r]\overline{A[i_1+u_1,...,i_r+u_r]}\quad for\quad integers\quad u_1,...,u_r
\end{equation}

\begin{definition}
Let a pair of H-phase r-dimensional arrays $\mathcal{A}$ and $\mathcal{B}$ of size $s_1\times\cdot\cdot\cdot\times s_r$, and
\begin{equation}
    \mathcal{C}_A(u_1,...,u_r)+\mathcal{C}_B(u_1,...,u_r)=0, \quad for\quad all(u_1,...,u_r)\neq(0,...,0)
\end{equation}
then we call array A and B Golay complementary array(GCA).
\end{definition}

\section{Construction framework}
In this section, we describe our construction based on the Three-Stage construction framework: 1. Construct GCA from GCSs; 2. Apply Equivalence operations to generate more GCAs; 3. Take projections from GCA to GCSs. This framework has been used in many previous constructions. In this paper, we mainly extend the first two stages by introducing a more general GCA construction and a more general equivalence operation.

\subsection{Construction of GCA from GCSs}
The original method introduced by Fiedler et al. constructs a $(2m+1)$-dimensional Golay array over $\mathbb{Z}_H$ from m+1 Golay sequence pairs over $\mathbb{Z}_H$:

\begin{theorem}\cite[Theorem 7]{Fiedler2008}
\label{Theorem:1}
Let $m\ge1$ be an integer.Suppose that the sequences ($a_k[j_k]$) and ($b_k[j_k]$) form a Golay sequence pair of length $s_k$ over $\mathbb{Z}_H$, for $k=0,1,...,m$. Then the arrays ($f_m[j_0,...,j_m,x_1,...,x_m]$) and ($g_m[j_0,...,j_m,x_1,...,x_m]$) of size $s_0\times\cdot\cdot\cdot\times s_m\times2\times\cdot\cdot\cdot\times 2$ (in which m copies of 2 appear) over $\mathbb{Z}_H$ given by
\begin{equation}
\label{Equation:27}
\begin{aligned}
&f_m[j_0,...,j_m,x_1,...,x_m]=\sum_{k=1}^{m-1}(a_k[j_k]+a_k^*[j_k]-b_k[j_k]-b_k^*[j_k]+\frac{H}{2})x_kx_{k+1}\\&+\sum_{k=1}^{m}(b^*_{k-1}[j_{k-1}]+b_k[j_k]-a_{k-1}[j_{k-1}]-a_k[j_k]+\frac{H}{2})x_k+\sum_{k=0}^{m}a_k[j_k],\\
&g_m[j_0,...,j_m,x_1,...,x_m]=f'_m[j_0,...,j_m,x_1,...,x_m]+\frac{H}{2}x_1,
\end{aligned}
\end{equation}
form a Golay array pair, where $f'_m[j_0,...,j_m,x_1,...,x_m]$ is formed from $f_m[j_0,...,j_m,x_1,...,x_m]$ by interchanging $a_0[j_0],b_0[j_0]$ and by interchanging $a_0^*[j_0],b_0^*[j_0]$.
\end{theorem}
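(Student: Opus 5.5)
We would prove the statement by translating complementarity into an identity of multivariate Laurent polynomials and then inducting on $m$. For an array $f$ over $\mathbb{Z}_H$ of size $s_0\times\cdots\times s_m\times 2\times\cdots\times 2$ we set
\[
F(z_0,\dots,z_m,y_1,\dots,y_m)=\sum \xi^{f[j_0,\dots,x_m]}\,z_0^{j_0}\cdots z_m^{j_m}y_1^{x_1}\cdots y_m^{x_m}.
\]
On the unit torus, $(f,g)$ is a Golay array pair if and only if $F\overline{F}+G\overline{G}$ is a constant, namely twice the number of entries. We use the same device for each seed pair: $A_k(z_k)\overline{A_k(z_k)}+B_k(z_k)\overline{B_k(z_k)}=2s_k$. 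We read $a_k^*$ as in \cite{Fiedler2008}, so that $\xi^{a^*}$ is the conjugate-reversal of $\xi^{a}$. Then $A_k^*(z)=z^{s_k-1}\overline{A_k(z)}$ on the torus, so $|A^*_k|=|A_k|$, and $(B_k^*,-A_k^*)$ is again a Golay pair. The $+\frac H2$ terms contribute exactly these sign changes.

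\textbf{Structure of $f_m$.} We would split $f_m$ according to the value of $x_m$.
\begin{itemize}
\item For $x_m=0$, the variables $x_m$ and $j_m$ enter only through $a_m[j_m]$. So the slice is $f_{m-1}$ in the remaining variables plus $a_m[j_m]$. Hence the $y_m^0$-part of $F_m$ equals $F_{m-1}\,A_m$.
\item For $x_m=1$, we collect all terms. They give $f_{m-1}$ with the roles of $a_{m-1}$ and $b_{m-1}^*$ interchanged through the factor $x_{m-1}$, plus $b_m[j_m]+b^*_{m-1}[j_{m-1}]-a_{m-1}[j_{m-1}]+\frac H2$. The $x_{m-1}x_m$ coefficient $a_{m-1}+a^*_{m-1}-b_{m-1}-b^*_{m-1}+\frac H2$ is designed precisely to make this work.
\end{itemize}
We expect this slice to be, up to a unimodular factor, the polynomial obtained from $F_{m-1}$ (or from $G_{m-1}$) by the reversal-conjugation mate operation, multiplied by $B_m$.

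The recursive form we aim for is
\[
F_m=F_{m-1}A_m+y_m\,\widetilde{G}_{m-1}B_m,\qquad G_m=F_{m-1}'A_m+y_m\,\widetilde{G}'_{m-1}B_m .
\]
Here $\widetilde{\ }$ denotes a mate of the kind above, chosen so that the pair behaves like $(G^*,-F^*)$.

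\textbf{Inductive step.} We would first verify $m=1$ directly; this is the standard Golay product of $(a_0,b_0)$ with $(a_1,b_1)$ on a $s_0\times s_1\times 2$ array. For the step, we expand $|F_m|^2+|G_m|^2$. The $y_m$-diagonal terms give
\[
\bigl(|F_{m-1}|^2+|G_{m-1}|^2\bigr)\bigl(|A_m|^2+|B_m|^2\bigr)\ \text{(up to rearrangement)},
\]
which is constant by induction and by the seed identity. The cross terms in $y_m^{\pm1}$ cancel because of the pairing $(F,G)\mapsto(G^*,-F^*)$. This is the usual mechanism in Golay's concatenation construction.

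\textbf{Main obstacle.} The hard step is the bookkeeping: checking that the $x_m=1$ slice really is the claimed mate of the $(m-1)$-level array. This requires tracking how the $*$-terms, the $\frac H2$ constants, and the interchange $a_0\leftrightarrow b_0$ defining $g_m$ propagate. We would handle it by proving an auxiliary claim by induction, simultaneously with the theorem: $f_{m-1}^{*}$, the conjugate-reversal in every coordinate with $x_k\mapsto 1-x_k$, equals the form $f_{m-1}$ with each $a_k\leftrightarrow b_k^*$ swapped, up to an additive constant. If this symmetry holds, the cross-term cancellation follows formally.
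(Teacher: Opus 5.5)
Your overall plan (generating functions, splitting $f_m$ along $x_m$, a Golay-concatenation recursion, induction) is the right mechanism. The paper itself gives no proof: it cites Fiedler et al.\ and remarks that the result is the length-2 case of Theorem 3, where by T10 each step is the concatenation $G=(D\otimes E)|(C\otimes F)$, $H=(C^*\otimes E)|(-D^*\otimes F)$. The gap is in the step you flag as the crux: \emph{the auxiliary symmetry you propose to prove is false}. Already for $f_1$, the $j_0$-part of $f_1^*$ takes the values $b_0,a_0^*$ at $x_1=0,1$. By contrast, $f_1$ with $a_0\leftrightarrow b_0^*$ gives $b_0^*,a_0$, and the difference depends on $j_0$. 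What is true is the following. Write the $j_k$-part of $f_m$ as $\varphi_k(j_k;x_k,x_{k+1})$, with $x_0=x_{m+1}=0$, taking the values $a_k,b_k,b_k^*,a_k^*$ at $(x_k,x_{k+1})=(0,0),(1,0),(0,1),(1,1)$. Full conjugate reversal $h\mapsto -h[s-1-j,1-x]$ then \emph{fixes} every interior $\varphi_k$, swaps $a_0\leftrightarrow b_0$, and sends $(a_m,b_m)\mapsto(b_m^*,a_m^*)$. It also turns the $\frac H2$-terms into a non-constant correction:
\[
f_m^*=f_m'\bigl[(a_m,b_m)\mapsto(b_m^*,a_m^*)\bigr]+\tfrac H2(x_1+x_m)+\tfrac H2,\qquad g_m^*=f_m\bigl[(a_m,b_m)\mapsto(b_m^*,a_m^*)\bigr]+\tfrac H2x_m .
\]
The level-$0$ swap, together with the $\frac H2x_1$ in $g_m$, is what produces the crossover between $f$ and $g$. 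No statement of the form ``each $a_k\leftrightarrow b_k^*$, up to a constant'' can capture it.

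With these identities, the $x_m=1$ slice of $f_m$ is $f_{m-1}[(a_{m-1},b_{m-1})\mapsto(b_{m-1}^*,a_{m-1}^*)]+\frac H2x_{m-1}+\frac H2+b_m=g_{m-1}^*+\frac H2+b_m$. Your description omits the $\frac H2x_{m-1}$, which is exactly what makes this $g_{m-1}^*$. The $x_m=1$ slice of $g_m$ is $f_{m-1}^*+b_m$. Hence, with $F_0=A_0$ and $G_0=B_0$,
\[
F_m=F_{m-1}A_m-y_mG^*_{m-1}B_m,\qquad G_m=G_{m-1}A_m+y_mF^*_{m-1}B_m .
\]
Your recursion takes $F'_{m-1}A_m$ as the $y_m^0$-part of $G_m$. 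That is wrong for $m\ge 2$, because the $\frac H2x_1$ of $g_{m-1}$ is missing. It would also break the diagonal term, since $(f_1,f_1')$ is not a Golay pair: the $y_1$-coefficient of $|F_1|^2+|F_1'|^2$ is $-2z_0^{s_0-1}\overline{A_0B_0A_1}\,B_1\neq 0$. With the corrected recursion the diagonal becomes $(|F_{m-1}|^2+|G_{m-1}|^2)(|A_m|^2+|B_m|^2)$. The cross terms cancel because, on the torus, $F^*_{m-1}$ and $G^*_{m-1}$ are one and the same monomial times $\overline{F_{m-1}}$ and $\overline{G_{m-1}}$ respectively, and the induction then closes.
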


A more general construction of GCA was recently given by Du and Jiang \cite{Du}:
\begin{theorem}\cite[Theorem~11]{Du}
\label{Theorem:2}
Given a nontrivial 2-phase GCA pair {A,B} of size $s_1\times\cdot\cdot\cdot\times s_r$,two H-phase GCA pairs {C,D} and {E,F} of size $t_1\times\cdot\cdot\cdot\times t_r$ and $u_1\times\cdot\cdot\cdot\times u_r$ respectively, suppose
\begin{equation}
\label{eqthm2}
\begin{aligned}
P &= \frac{1}{4}\bigl[ A + B + (B^{*} - A^{*}) \bigr],\
Q = \frac{1}{4}\bigl[ A + B - (B^{*} - A^{*}) \bigr],\\
X &= P \otimes C + Q \otimes D,\
Y = Q^{*} \otimes C - P^{*} \otimes D,\\
G &= X \otimes E + Y \otimes F,\
H = Y^{*} \otimes E - X^{*} \otimes F.
\end{aligned}
\end{equation}
where $*$ denotes reverse conjugation and $\otimes$ denotes Kronecker product.
then {G,H} is an H-phase GCA of size $s_1t_1u_1\times\cdot\cdot\cdot\times s_rt_ru_r$.
\end{theorem}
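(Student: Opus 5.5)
We plan to prove Theorem~\ref{Theorem:2} with generating functions. For an array $A$ of size $s_1\times\cdots\times s_r$, set $A(z)=\sum A[i_1,\dots,i_r]z_1^{i_1}\cdots z_r^{i_r}$ and write $\tilde A(z)=\overline{A}(z^{-1})$, where conjugation acts on the coefficients. The aperiodic autocorrelation is encoded by $A(z)\tilde A(z)$, so $\{A,B\}$ is a GCA pair exactly when $A\tilde A+B\tilde B$ equals the constant $2\prod s_k$ (times the squared modulus of the entries).

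For the reverse conjugate of a size-$s$ array we have $A^*(z)=z^{s-1}\tilde A(z)$, with the multi-index shorthand $z^{s-1}=\prod z_k^{s_k-1}$. The Kronecker product corresponds to substitution: $(P\otimes C)(z)=P(z^{t})C(z)$, where $z^t=(z_1^{t_1},\dots)$.

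\textbf{Step 1: the identity $P\tilde P+Q\tilde Q=\mathrm{const}$ and $PQ^*=QP^*$.} We have $P+Q=(A+B)/2$ and $P-Q=(B^*-A^*)/2$. A short expansion, using $A\tilde A+B\tilde B=2s$ and the realness of $A,B$ (2-phase), gives $P\tilde P+Q\tilde Q=\tfrac14(A\tilde A+B\tilde B)/\dots$, a constant. We will also need the cross relation $Q^*$ vs.\ $P^*$, namely
\[
P(z)\,\widetilde{Q^*}(z)=Q(z)\,\widetilde{P^*}(z)\cdot(\text{monomial}),
\]
or more precisely $P\tilde Q^{*}-Q\tilde P^{*}=0$ in the appropriate sense. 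This is the Alamouti/quaternion structure: the matrix $\begin{pmatrix}P & Q\\ Q^* & -P^*\end{pmatrix}$ should be paraunitary.

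Note that $P$ and $Q$ take values in $\{0,\pm1/2,\dots\}$, so they are not unimodular. However, $P\otimes C+Q\otimes D$ has disjoint supports: entrywise, $P$ and $Q$ have exactly one nonzero entry at each position, with values in $\{\pm1\}$ after the factor $\tfrac14$ is accounted for (since $A\pm B$ and $B^*\mp A^*$ combine). Checking this support/alphabet claim shows that $X,Y,G,H$ are $H$-phase. This is where the nontriviality of $\{A,B\}$ enters.

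\textbf{Step 2: $\{X,Y\}$ is a GCA pair.} We compute
\[
X\tilde X+Y\tilde Y=(P\tilde P+Q^*\tilde Q^*)C\tilde C+(Q\tilde Q+P^*\tilde P^*)D\tilde D+\text{cross terms}.
\]
Since $P^*\tilde P^*=P\tilde P$ (reversal preserves autocorrelation), the two coefficients both equal $P\tilde P+Q\tilde Q$, a constant $c$. The cross terms are $(P\tilde Q-Q^*\tilde P^*)C\tilde D$ plus its conjugate, and they vanish by the cross relation from Step~1, with the substitution $z\mapsto z^t$ applied throughout. The result is $c\,(C\tilde C+D\tilde D)$, which is constant.

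\textbf{Step 3: iterate.} The pair $\{X,Y\}$ is a GCA pair, and $Y^*,X^*$ play the roles of $Q^*,P^*$. The same computation with $(X,Y)$ in place of $(P,Q)$ and $(E,F)$ in place of $(C,D)$ therefore applies. For this the cross relation $X\tilde Y-Y^*\tilde X^*=0$ must hold; it holds because it reduces to $z^{\mathrm{size}}\,(X\tilde Y - \widetilde{\tilde X Y})$ type identities. In fact, for any equal-size pair the combination $\begin{pmatrix}X&Y\\Y^*&-X^*\end{pmatrix}$ has vanishing off-diagonal products automatically: $X\widetilde{Y^*}=X\,z^{-(n-1)}Y$ and $Y\widetilde{X^*}=Y\,z^{-(n-1)}X$, which are equal. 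So the cross terms cancel identically for Step~3, and likewise for Step~2, since $P,Q$ have the same size.

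The main obstacle is Step~1: showing that $P,Q$ produce a unimodular $H$-phase output with the correct size $s_k t_k$ rather than a larger one. We expect to handle it by observing that $A+B$ and $B^*-A^*$ are supported complementarily, a consequence of the standard 2-phase fact that $a_i b_i = \pm a_{s-1-i}b_{s-1-i}$ with the appropriate sign pattern for Golay pairs. If this fails in general, we would restrict to the known nontrivial pairs, or normalize $\{A,B\}$ by equivalence operations so that the fact holds.
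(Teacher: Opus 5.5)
Your generating-function argument for the correlations is sound. You get $P\tilde P+Q\tilde Q=\frac18\bigl[(A+B)(\tilde A+\tilde B)+(B^*-A^*)(\widetilde{B^*}-\widetilde{A^*})\bigr]=\frac14(A\tilde A+B\tilde B)$, and realness is not even needed for this. The cross terms vanish identically because $P,Q$ and later $X,Y$ have equal sizes, so $G\tilde G+H\tilde H$ is constant. Note, however, that $X$ and $Y$ have zero entries. So $\{X,Y\}$ is only a complementary pair, not an $H$-phase GCA. Your Step~3 survives anyway, because it uses nothing but $X\tilde X+Y\tilde Y=\mathrm{const}$ and equal sizes.

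The genuine gap is the other half of the conclusion: that $G$ and $H$ are $H$-phase. This is the only place where ``2-phase'' and ``nontrivial'' enter, and your claims there are wrong:
\begin{itemize}
\item $P$ and $Q$ do not have one nonzero entry at every position, and $X,Y$ are not $H$-phase. For $A=(1,1)$, $B=(1,-1)$ one gets $P=(0,0)$ and $Q=(1,0)$, so $X=Q\otimes D$ and $Y=Q^*\otimes C$ vanish on half their entries.
\item $A+B$ and $B^*-A^*$ have the \emph{same} support, not complementary ones.
\end{itemize}

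What is true is the paper's Lemma~\ref{lemma1} (citing Craigen and Du--Jiang): at each position $i$, exactly one of $P[i],Q[i],P^*[i],Q^*[i]$ is nonzero, and it equals $\pm1$. Then the entry of $G$ indexed by $(i,j,l)$, namely $P[i]C[j]E[l]+Q[i]D[j]E[l]+Q^*[i]C[j]F[l]-P^*[i]D[j]F[l]$, is a single unimodular term, and likewise for $H$.

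This rests on the exact relation $A[i]B[i]A[\bar\imath]B[\bar\imath]=-1$ for all $i$, where $\bar\imath=(s_1-1-i_1,\dots,s_r-1-i_r)$. If $A[i]=B[i]$, then $A[\bar\imath]=-B[\bar\imath]$. Hence $P[i]=\frac12(A[i]+B[\bar\imath])$ and $Q[i]=\frac12(A[i]-B[\bar\imath])$, exactly one of which is $\pm1$, while $P^*[i]=Q^*[i]=0$. The case $A[i]=-B[i]$ is symmetric.

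You state this relation only with an unspecified sign and do not prove it. Your fallback, restricting to known pairs or normalizing by equivalences, cannot prove the theorem for an arbitrary nontrivial 2-phase GCA in any dimension.

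One way to close the gap is as follows.
\begin{itemize}
\item Since $xy\equiv x+y-1\pmod 4$ for $x,y\in\{\pm1\}$, the conditions $\mathcal{C}_A(u)+\mathcal{C}_B(u)=0$ for nonnegative shifts $u\neq0$ give $L(u)+M(u)\equiv2N(u)\pmod 4$.
\item Here $L(u)$ and $M(u)$ are the sums of $A+B$ over the boxes $\prod_k[0,s_k-1-u_k]$ and $\prod_k[u_k,s_k-1]$, and $N(u)=\prod_k(s_k-u_k)$.
\item Apply $f\mapsto\sum_{\epsilon\in\{0,1\}^r}(-1)^{\epsilon_1+\dots+\epsilon_r}f(u+\epsilon)$ at any $u\neq0$; the boxes are empty when some $u_k=s_k$. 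This yields $(A+B)[u]+(A+B)[\bar u]\equiv2\pmod 4$, i.e.\ the four entries have product $-1$.
\item Nontriviality covers $u=0$ via $u=(s_1-1,\dots,s_r-1)\neq0$.
\end{itemize}
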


The proof of Theorem~\ref{Theorem:2} is given in \cite[Theorem~11]{Du}. Since our goal is to construct GCAs from a set of GCSs. We obtain the following corollary:
\begin{corollary}
\label{corollary:1}
Given a nontrivial 2-phase GCS A,B of length $s_{r+1}$, an H-phase r-dimensional GCA C,D of size $s_1\times\cdot\cdot\cdot\times s_r$ and an H-phase GCS E,F of length $s_{r+2}$, we can construct an H-phase $(r+2)$-dimensional GCA of size  $s_1\times\cdot\cdot\cdot\times s_r\times s_{r+1}\times s_{r+2}$.
\end{corollary}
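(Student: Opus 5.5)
The plan is to obtain the corollary directly from Theorem~\ref{Theorem:2}. All three input pairs will be embedded as arrays of a common dimension $r+2$ by padding with trivial size-$1$ dimensions. Theorem~\ref{Theorem:2} requires the three pairs to have the same number of dimensions $r'$ and yields size $\prod$-wise $s_it_iu_i$ in each coordinate. We therefore take $r'=r+2$ and embed as follows.

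\begin{itemize}
\item The 2-phase GCS $\{A,B\}$ of length $s_{r+1}$ becomes an array of size $1\times\cdots\times1\times s_{r+1}\times 1$, with $s_{r+1}$ in position $r+1$.
\item The $r$-dimensional GCA $\{C,D\}$ becomes an array of size $s_1\times\cdots\times s_r\times1\times1$.
\item The GCS $\{E,F\}$ becomes an array of size $1\times\cdots\times1\times s_{r+2}$.
\end{itemize}

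The coordinatewise products are then $s_1,\dots,s_r,s_{r+1},s_{r+2}$, which is exactly the claimed size.

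The key step is to verify that these embeddings preserve the Golay property. For an array $\tilde A$ obtained by inserting size-$1$ axes, the aperiodic autocorrelation $\mathcal{C}_{\tilde A}(u_1,\dots,u_{r+2})$ vanishes identically whenever some shift $u_k\neq0$ on a size-$1$ axis, since the shifted index falls outside the support. When all such shifts are zero, $\mathcal{C}_{\tilde A}$ equals the original autocorrelation in the remaining coordinates. Hence $\mathcal{C}_{\tilde A}+\mathcal{C}_{\tilde B}=0$ for every nonzero shift, so each embedded pair is a GCA of the right size. The entries are unchanged, so $\{\tilde A,\tilde B\}$ remains a nontrivial 2-phase pair and the other two remain $H$-phase pairs.

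We also need the Kronecker product $\otimes$ in Theorem~\ref{Theorem:2} to be the multidimensional one, with indices combined as $i_k=j_k t_k+\ell_k$ in each axis. With size-$1$ factors this reduces to a tensor (outer) product placement, so no new autocorrelation argument is needed beyond citing \cite[Theorem~11]{Du}.

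The main point to check, and the only potential obstacle, is that $G,H$ really are $H$-phase arrays rather than merely complex-valued. The entries of $P,Q$ involve factors $\tfrac14(A+B\pm(B^*-A^*))$. Because the supports of the padded $A$ and $C$ (and of $E$) lie in disjoint axes, each entry of $X$ is a single product. Likewise, for 2-phase $A,B$ the combinations $\tfrac12(A\pm B)$ have entries in $\{0,\pm1\}$ with complementary supports, which keeps the entries of $G,H$ unimodular in the alphabet. This is precisely what \cite{Du} establishes under the ``$H$-phase'' conclusion of Theorem~\ref{Theorem:2}, so we will simply invoke it. Applying Theorem~\ref{Theorem:2} to the three padded pairs then gives the $(r+2)$-dimensional $H$-phase GCA of size $s_1\times\cdots\times s_{r+2}$.
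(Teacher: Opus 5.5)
Your proposal is correct and is essentially the paper's proof: you pad $\{A,B\}$, $\{C,D\}$ and $\{E,F\}$ with size-$1$ axes to sizes $1^{(r)}\times s_{r+1}\times 1$, $s_1\times\cdots\times s_r\times 1\times 1$ and $1^{(r+1)}\times s_{r+2}$, then apply Theorem~\ref{Theorem:2}; your explicit check that padding preserves the Golay property fills in a step the paper leaves implicit. Your closing alphabet discussion is unnecessary, since Theorem~\ref{Theorem:2} already gives an $H$-phase pair, and its stated reason is off: an entry of $X$ is a single product (or zero) because $P,Q,P^*,Q^*$ have disjoint supports (Lemma~\ref{lemma1}), not because $A$ and $C$ lie on disjoint axes.
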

\begin{proof}
Extend C,D to an (r+2)-dimensional GCA of size $s_1\times\cdot\cdot\cdot\times s_r\times1^{(2)}$, extend GCS A,B to (r+2)-dimensional GCA of size $1^{(r)}\times s_{r+1}\times1$, extend GCS E,F to (r+2)-dimensional GCA of size $1^{(r+1)}\times s_{r+2}$. Using these extended (r+2)-dimensional GCAs as entries of Theorem~\ref{Theorem:2}, we obtain the (r+2)-dimensional GCA of size $s_1\times\cdot\cdot\cdot\times s_r\times s_{r+1}\times s_{r+2}$.
\end{proof}

By applying Corollary~\ref{corollary:1} repeatedly, we can construct a $(2r+1)$-dimensional GCA from $r$ nontrivial 2-phase GCSs and $r+1$ H-phase GCSs:
\begin{theorem}
\label{Theorem:3}
Given $r$ nontrivial 2-phase GCSs of length $s_1,...,s_r$ and $r+1$ H-phase GCSs of length $t_1,...,t_{r+1}$, we can construct an H-phase $(2r+1)$-dimensional GCA of size  $t_1\times s_1\times t_2\times\cdot\cdot\cdot\times s_r\times t_{r+1}$.
\end{theorem}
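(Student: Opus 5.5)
We argue by induction on $r$, applying Corollary~\ref{corollary:1} once per step. Throughout, a GCS of length $t$ is regarded as a $1$-dimensional GCA of size $t$. The induction hypothesis is: from the 2-phase GCSs of lengths $s_1,\dots,s_k$ and the $H$-phase GCSs of lengths $t_1,\dots,t_{k+1}$ one obtains an $H$-phase $(2k+1)$-dimensional GCA of size $t_1\times s_1\times t_2\times\cdots\times s_k\times t_{k+1}$.

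For the base case $k=0$, the hypothesis asks for a $1$-dimensional GCA of size $t_1$. This is just the given $H$-phase GCS of length $t_1$. (If one prefers to start at $k=1$, apply Corollary~\ref{corollary:1} directly with $C,D$ the GCS of length $t_1$, $A,B$ the 2-phase GCS of length $s_1$, and $E,F$ the GCS of length $t_2$. This gives a $3$-dimensional GCA of size $t_1\times s_1\times t_2$.)

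For the inductive step, suppose we already have the $(2k+1)$-dimensional $H$-phase GCA $C,D$ of size $t_1\times s_1\times\cdots\times s_k\times t_{k+1}$, with $k<r$. Apply Corollary~\ref{corollary:1} with this $C,D$ as the $r$-dimensional GCA, taking its dimension to be $2k+1$. Take $A,B$ to be the nontrivial 2-phase GCS of length $s_{k+1}$ and $E,F$ the $H$-phase GCS of length $t_{k+2}$. The corollary yields an $H$-phase GCA of dimension $(2k+1)+2=2(k+1)+1$ and size $t_1\times s_1\times\cdots\times t_{k+1}\times s_{k+1}\times t_{k+2}$, which is exactly the statement for $k+1$. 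After $r$ steps we reach the claimed $(2r+1)$-dimensional GCA of size $t_1\times s_1\times t_2\times\cdots\times s_r\times t_{r+1}$.

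The only point needing care is that Corollary~\ref{corollary:1} is stated for an arbitrary dimension $r$ of the middle array $C,D$. The new axes are appended in the order $s_{k+1}$ then $t_{k+2}$. This matches the alternating pattern, since the 2-phase factor goes in position $r+1$ and the $H$-phase factor in position $r+2$. We must also check that the nontriviality hypothesis on the 2-phase pair $A,B$ (required by Theorem~\ref{Theorem:2}) is met at every step. It is, because each $s_j$-sequence is assumed nontrivial. No other obstacle arises; the statement is a direct iteration of the corollary.
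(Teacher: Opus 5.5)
Your proposal is correct and takes essentially the same approach as the paper. The paper also first applies Corollary~\ref{corollary:1} to the sequences of lengths $t_1,s_1,t_2$, then repeatedly feeds the previous GCA in as $(C,D)$ together with the next 2-phase and $H$-phase sequences, $r$ times in total; your induction formalizes that iteration, with the $k=0$ base case as a harmless reindexing.
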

\begin{proof}
Take the H-phase GCS of length t1 and t2 and 2-phase GCS of length s1 as entries of Corollary \ref{corollary:1}. This gives a GCA (G1,H1) of size $t_1\times s_1\times t_2$.

Next, use (G1,H1), the H-phase GCS of length t3, and the 2-phase GCS of length s2 as entries of Corollary \ref{corollary:1}. This yields a GCA (G2,H2) of size $t_1\times s_1\times t_2\times s_2\times t_3$. 

Repeating this process r times and we will get the $(2r+1)$-dimensional GCA of size $t_1\times s_1\times t_2\times\cdot\cdot\cdot\times s_r\times t_{r+1}$.
\end{proof}

Theorem ~\ref{Theorem:1} can be viewed as a special case of Theorem ~\ref{Theorem:3}, where all 2-phase GCSs are of length 2. 

\subsection{Equivalence operations}

An equivalence operation for GCS and GCA is a transformations that preserve the GCS and GCA property. It is well known that the following five equivalence operations can preserve the GCS property.

\begin{proposition}\label{proposition1}\cite[Proposition 8]{Bright}
Let ([$a_1,...,a_{n}$]),([$b_1,...,b_{n}$])be a H-phase GCS, then the following are also H-phase GCS:\\
E1. (Reversal) $([a_n,...,a_1],[b_n,...,b_1])$. \\
E2. (Conjugate Reverse A) $([a_n,...,a_1], [b_1,...,b_n])$. \\
E3. (Swap) $([b_1,...,b_n], [a_1,...,a_n])$.  \\
E4. (Scale A) $([\xi a_1,...,\xi a_n],[b_1,...,b_n])$. \\
E5. (Positional Scaling) $(\xi\star A,\xi\star B)$ where $c\star A$ denotes the sequence of coefficients of the polynomial A(cz), i.e., $[a_1, ca_2,c^2a_3...,c^{n-1}a_n]$.
\end{proposition}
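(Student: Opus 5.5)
The plan is to verify each operation directly from the definition. We show that $\mathcal{C}_{A'}(u)+\mathcal{C}_{B'}(u)=0$ for every $u\neq 0$ whenever $\mathcal{C}_A(u)+\mathcal{C}_B(u)=0$ for every $u\neq 0$. The convenient language is generating polynomials: write $A(z)=\sum_j a_{j+1}z^j$. For $|z|=1$ the Golay property is equivalent to
\[
|A(z)|^2+|B(z)|^2=2n,
\]
because $A(z)\overline{A(z)}=\sum_u \mathcal{C}_A(u)z^{-u}$ (up to the sign convention on $u$). So it suffices to show that each operation leaves $|A(z)|^2+|B(z)|^2$ unchanged on the unit circle, possibly after a substitution $z\mapsto cz$ with $|c|=1$. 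We also need to check that the alphabet $S$ is preserved. I read the "reversal" in E1 and E2 as reverse conjugation $A^*$, which is the operation used in Theorem~\ref{Theorem:2}. For 2-phase sequences conjugation is trivial, and in general $S$ is closed under conjugation and under multiplication by $\xi$.

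\textbf{E3 and E4.} Both are immediate. Swapping $A$ and $B$ does not change the sum. Scaling by $\xi$ gives $\mathcal{C}_{\xi A}(u)=\xi\bar\xi\,\mathcal{C}_A(u)=\mathcal{C}_A(u)$, since $|\xi|=1$.

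\textbf{E1 and E2.} Set $a'_i=\overline{a_{n+1-i}}$. Reindexing the sum gives $\mathcal{C}_{A^*}(u)=\mathcal{C}_A(u)$; in polynomial form, $A^*(z)=z^{n-1}\overline{A(1/\bar z)}$, which has the same modulus as $A(z)$ on $|z|=1$. This invariance holds for each sequence separately, so we may apply it to $A$ alone (E2) or to both sequences (E1). If the paper intends plain reversal without conjugation, the same reindexing gives $\mathcal{C}_{\mathrm{rev}A}(u)=\mathcal{C}_A(-u)=\overline{\mathcal{C}_A(u)}$. Summing the two sequences still gives zero for $u\neq0$, so E1 survives under that reading too. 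E2 needs the conjugated version when $H>2$, and I will flag that interpretation explicitly.

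\textbf{E5.} We have $(\xi\star A)(z)=A(\xi z)$, and $|\xi z|=1$ whenever $|z|=1$, so
\[
|A(\xi z)|^2+|B(\xi z)|^2=2n .
\]
Equivalently, $\mathcal{C}_{\xi\star A}(u)=\bar\xi^{\,u}\mathcal{C}_A(u)$, and the common factor cancels in the sum over $A$ and $B$. The entries $\xi^{j}a_{j+1}$ remain in $S$.

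The only real obstacle is fixing the right interpretation of the "reverse" operations so that E2 is genuinely valid for $H$-phase sequences, namely reading it as conjugate reversal. Everything else is a one-line index shift.
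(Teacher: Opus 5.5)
Your proposal is correct. The paper gives no argument for this proposition; it only cites \cite[Proposition 8]{Bright} as well known. Your direct verification is therefore a self-contained replacement for that citation. It is the same kind of check from the definition of $\mathcal{C}_A$ that the paper mentions, without details, for the array version in Proposition~\ref{proposition2}.

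The identities you use are all right and suffice on their own:
$\mathcal{C}_{A^*}(u)=\mathcal{C}_A(u)$, $\mathcal{C}_{\xi A}(u)=\mathcal{C}_A(u)$, $\mathcal{C}_{\xi\star A}(u)=\bar\xi^{\,u}\mathcal{C}_A(u)$, and, for plain reversal, $\mathcal{C}_{\mathrm{rev}A}(u)=\overline{\mathcal{C}_A(u)}$. The unit-circle reformulation is optional. If you keep it, add the one-line remark that a Laurent polynomial constant on $|z|=1$ has vanishing nonconstant coefficients.

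What your version adds over the bare citation is the E2 flag, and it is a genuine correction rather than mere caution. As printed, E2 omits the conjugation in Bright's original, and plain reversal of $A$ alone does fail for $H=4$. For the seed $(A_5,B_5)$ of Section~\ref{subsec1}, one has $\mathcal{C}_{A_5}(1)=1+i$ and $\mathcal{C}_{B_5}(1)=-1-i$. Reversing $A_5$ without conjugating therefore gives $\overline{1+i}+(-1-i)=-2i\neq 0$ at shift $1$. The paper's later usage confirms that conjugate reversal is intended: see $A^*$ in E2 of Proposition~\ref{proposition2}, and pairs such as $(A_3^*,B_3)$ in Proposition~\ref{proposition5}. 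E1 holds under either reading, as you showed.
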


We can extend Proposition~\ref{proposition1} to GCA over $\mathbb{Z}_H$. The following operations preserve zero autocorrelation and can be verified directly from the definition of $\mathcal{C}_A$ and $\mathcal{C}_B$.

\begin{proposition}\label{proposition2}
Let $A[i_1,...,i_r]$ and $B[i_1,...,i_r]$ be Golay complementary array over $\mathbb{Z}_H$, then the following are also Golay complementary arrays:\\
E1. (Reversal on k-th dimension) For $k\in [1,2,...,r]$, $(A[i_1,...,s_k-1-i_k,...,i_r],B[i_1,...,s_k-1-i_k,...,i_r])$.\\
E2. (Conjugate Reverse A)$A^*$,B\\
E3. (Swap)$(B,A)$\\
E4. (Scale A) ($\xi A$,$B$).\\
E5. (Positional Scaling on k-th dimension)For $k\in {1,2,...,r}$,$(A[i_1,...,i_r]\xi^{i_k-1},B[i_1,...,i_r]\xi^{i_k-1})$
\end{proposition}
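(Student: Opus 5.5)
The plan is to check each of the five operations directly against the definition of $\mathcal{C}_A$, showing in every case that the transformed pair $(A',B')$ satisfies $\mathcal{C}_{A'}(u)+\mathcal{C}_{B'}(u)=0$ for all $u=(u_1,\dots,u_r)\neq(0,\dots,0)$. In each case I will write $\mathcal{C}_{A'}$ (and $\mathcal{C}_{B'}$) in terms of $\mathcal{C}_A$ (and $\mathcal{C}_B$) at a transformed shift. I will use that entries vanish outside the index box, so every sum may be taken over all integer indices without boundary bookkeeping.

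For E3 and E4 the verification is immediate. Swapping only permutes the two summands. For scaling, $\mathcal{C}_{\xi A}(u)=\sum \xi A[i]\overline{\xi A[i+u]}=|\xi|^2\mathcal{C}_A(u)=\mathcal{C}_A(u)$, since $|\xi|=1$.

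For E1 (reversal on the $k$-th dimension), I substitute $i_k\mapsto s_k-1-i_k$ in the sum. This gives $\mathcal{C}_{A'}(u)=\mathcal{C}_A(u')$, where $u'$ agrees with $u$ except that $u'_k=-u_k$. The same identity holds for $B$, and $u'\ne 0$ exactly when $u\ne0$, so the complementary property carries over.

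For E2, with $A^*[i]=\overline{A[s-1-i]}$ (reversal in all coordinates plus conjugation), reindexing gives $\mathcal{C}_{A^*}(u)=\sum_i \overline{A[s-1-i]}A[s-1-i-u]=\sum_j A[j-u]\overline{A[j]}$. I then shift $j\mapsto j+u$ to get $\sum_j A[j]\overline{A[j+u]}=\mathcal{C}_A(u)$. So $\mathcal{C}_{A^*}=\mathcal{C}_A$, and $(A^*,B)$ is again a GCA.

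The only step needing a moment's care is E5 (positional scaling), because the phase factor depends on position and must be shown to factor out. Multiplying every entry of both arrays by $\xi^{i_k-1}$ gives
\begin{equation*}
\sum_i A[i]\xi^{i_k-1}\,\overline{A[i+u]\xi^{i_k+u_k-1}}=\xi^{-u_k}\,\mathcal{C}_A(u),
\end{equation*}
because the $i$-dependence cancels and $|\xi|=1$. The same common factor $\xi^{-u_k}$ appears for $B$, so $\mathcal{C}_{A'}(u)+\mathcal{C}_{B'}(u)=\xi^{-u_k}\bigl(\mathcal{C}_A(u)+\mathcal{C}_B(u)\bigr)=0$ for $u\neq 0$. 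This computation also shows why both arrays must receive the same positional scaling: the factor must be common to both autocorrelations.

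Finally, every operation maps $H$-phase entries to $H$-phase entries, since $\xi$ is an $H$-th root of unity and conjugation preserves $S$. Hence each transformed pair is again a Golay complementary array over $\mathbb{Z}_H$.
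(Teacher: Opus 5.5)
Your proposal is correct and takes the same route as the paper, which only asserts that each operation can be verified directly from the definition of the aperiodic autocorrelation. Your case-by-case computations, notably $\mathcal{C}_{A^*}=\mathcal{C}_A$ and the common factor $\xi^{-u_k}$ under positional scaling, supply exactly the details the paper leaves out.
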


For convenience, we denote the reversal of GCA(A,B) on the k-th dimension as E1(k,A,B), the scaling of A by $\xi^k$ as $\xi^kA$, and the positional scaling of GCA(A,B) on the k-th dimension as E5(k,A,B), and we can do it multiple times, denoted as $E5^q$(k,A,B). 

There are two possible methods to generate additional GCAs. The first is to apply Proposition~\ref{proposition1} to the 2-phase GCS pair $A,B$ and the H-phase GCS pair $E,F$, and Proposition~\ref{proposition2} to the $r$-dimensional H-phase GCA pair $C,D$ during each application of Corollary~\ref{corollary:1}. The second is to apply Proposition~\ref{proposition2} to the GCA pair $G,H$ obtained after each application of Corollary~\ref{corollary:1}. However, applying all of the above equivalence operations produces many duplicated GCAs. The duplicated cases are summarized in the following proposition.

First we need the following property of P and Q.
\begin{lemma}\cite[Lemma 4]{Craigen}\cite[Proposition~10]{Du}
\label{lemma1}
Let A,B be 2-phase GCS, and P,Q defined as:\\
$P = \frac{1}{4}\bigl[ A + B + (B^{*} - A^{*}) \bigr],\
Q = \frac{1}{4}\bigl[ A + B - (B^{*} - A^{*}) \bigr]$\\
then P,Q,$P^*,Q^*$ are disjoint (0,$\pm1$) sequences, {P[i],Q[i],$P^*[i],Q^*[i]$} consist of three zeros and one $\pm1$ for any i.
\end{lemma}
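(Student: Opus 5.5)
Let $n$ be the common length of $A$ and $B$. For 2-phase sequences conjugation is trivial, so $A^*[i]=A[n-1-i]$ and $B^*[i]=B[n-1-i]$, and every entry is $\pm1$. Fix an index $i$ and write $a=A[i]$, $b=B[i]$, $a'=A[n-1-i]$, $b'=B[n-1-i]$. Then
\begin{equation*}
4P[i]=a+b+b'-a',\quad 4Q[i]=a+b-b'+a',\quad 4P^*[i]=a'+b'+b-a,\quad 4Q^*[i]=a'+b'-b+a.
\end{equation*}
The claim is purely local: it only concerns the quadruple $(a,b,a',b')$. I will first show that this quadruple has a rigid structure, and then evaluate the four expressions directly.

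The key input is the well-known identity for 2-phase Golay pairs:
\begin{equation*}
A[i]\,A[n-1-i]\,B[i]\,B[n-1-i] = -1 \quad \text{for all } i,
\end{equation*}
that is, $aba'b'=-1$.

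I would prove this identity as follows.
\begin{itemize}
\item Set $\alpha_i=a_i a_{n-1-i}$ and $\beta_i=b_ib_{n-1-i}$. We want $\alpha_i\beta_i=-1$.
\item Write $a_i=1-2x_i$ with $x_i\in\{0,1\}$, and similarly for $b$.
\item Reduce the relation $C_A(u)+C_B(u)=0$ modulo $4$. Since $a_ja_{j+u}=1-2(x_j+x_{j+u})\pmod 4$, each correlation reads $C_A(u)\equiv (n-u)-2\sum_j (x_j+x_{j+u}) \pmod 4$.
\item Summing over $j$, the quantity $\sum_j(x_j+x_{j+u})$ counts every $x_k$ twice, except the first $u$ and last $u$ entries, which are counted once.
\item Adding the $A$ and $B$ versions gives $2(n-u)-2\,S_u\equiv 0 \pmod 4$. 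Here $S_u$ is the sum of $x_k+y_k$ over $k<u$ and $k\ge n-u$, taken mod 2.
\item Take the difference between $u$ and $u+1$. This isolates $x_u+x_{n-1-u}+y_u+y_{n-1-u}\equiv 1 \pmod 2$, which is exactly $\alpha_u\beta_u=-1$.
\end{itemize}
Since $n$ is even, the middle-index case $i=n-1-i$ does not arise.

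With the identity in hand, the lemma follows from a short case analysis. The relation $aba'b'=-1$ means that exactly one or exactly three of $a,b,a',b'$ equal $-1$.
\begin{itemize}
\item Multiply all four quantities by $a$ if necessary; this reduces to the case of a single odd entry among $(a,b,b',-a')$ relative to the others.
\item A cleaner route is this. Among the signed vectors $(a,b,b',-a')$, $(a,b,-b',a')$, $(-a,b,b',a')$ and $(a,-b,b',a')$, the product constraint forces the four-term sums to take values in $\{\pm4,0\}$.
\item Directly, note $4(P+Q)[i]=2(a+b)$ and $4(P-Q)[i]=2(b'-a')$. Also $ab=-a'b'$, so exactly one of $a+b$ and $b'-a'$ is zero.
\item Suppose $a=b$. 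Then $b'=-a'$, so $P[i]=Q[i]$ up to sign considerations, which gives one of $P[i],Q[i]$ equal to $\pm1$ and the other $0$. In that case the starred quantities $a'+b'=0$ and $b-a=0$ both vanish.
\item The case $a=-b$ is symmetric: the nonzero value moves to $P^*[i]$ or $Q^*[i]$.
\end{itemize}
So at each $i$, exactly one of the four values is $\pm1$ and the other three are $0$, which is the disjointness claim.

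The main obstacle is the mod-4 telescoping argument that establishes $aba'b'=-1$. It requires careful bookkeeping of the boundary terms in the correlations. Everything after that is a finite sign check.
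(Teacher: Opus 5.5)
The paper gives no proof of this lemma; it only cites Craigen et al.\ and Du--Jiang. Your strategy is the standard argument behind those citations: first Golay's identity $A[i]A[n-1-i]B[i]B[n-1-i]=-1$, then a local sign check.

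Your mod-4 derivation of the identity is sound, up to two bookkeeping points. First, differencing the congruences $S_u\equiv n-u \pmod 2$ for consecutive $u\in\{1,\dots,n-1\}$ only gives the index pairs $\{u,n-1-u\}$ with $1\le u\le n-2$. Take the pair $\{0,n-1\}$ directly from $C_A(n-1)+C_B(n-1)=a_0a_{n-1}+b_0b_{n-1}=0$. Second, you need $n\ge 2$: for the trivial pair $A=B=(1)$ one gets $P=(1/2)$. Once $n\ge 2$, evenness of $n$ follows from your own congruence at the middle index.

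The final sign check, however, is wrong as written. You claim that exactly one of $a+b$ and $b'-a'$ vanishes, but $a+b=0\iff ab=-1\iff a'b'=1\iff b'-a'=0$, so they vanish together. Also, in the case $a=b$ you do not have $P[i]=\pm Q[i]$. Rather, $P[i]=(a-a')/2$ and $Q[i]=(a+a')/2$, and exactly one of these is nonzero because $a'=\pm a$.

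The correct split on the sign of $ab$ is as follows.
\begin{itemize}
\item If $ab=1$, then $a'+b'=b-a=0$, so $P^*[i]=Q^*[i]=0$. Meanwhile $a+b$ and $b'-a'$ both lie in $\{\pm2\}$, so exactly one of $4P[i]=(a+b)+(b'-a')$ and $4Q[i]=(a+b)-(b'-a')$ is nonzero, and it equals $\pm4$.
\item If $ab=-1$, the roles of $(P,Q)$ and $(P^*,Q^*)$ are exchanged.
\end{itemize}

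A quicker finish avoids the case split altogether. Your second bullet correctly gives $P[i],Q[i],P^*[i],Q^*[i]\in\{0,\pm1\}$, since each signed quadruple has product $-aba'b'=1$. Combine this with the identity $16\bigl(P[i]^2+Q[i]^2+P^*[i]^2+Q^*[i]^2\bigr)=2\bigl[(a+b)^2+(a-b)^2+(a'+b')^2+(a'-b')^2\bigr]=16$, which holds for arbitrary $\pm1$ entries. Together these force exactly one of the four values to be $\pm1$.
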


\begin{proposition}\label{proposition3}
Let a construction of Corollary~\ref{corollary:1} be denoted as (G,H)=M(A,B,C,D,E,F), where A,B is a nontrivial 2-phase GCS of length $s_{r+1}$, C,D is a H-phase r-dimensional GCA of size $s_1\times\cdot\cdot\cdot\times s_r$, E,F is a H-phase GCS of length $s_{r+2}$, G,H is a H-phase r+2 dimensional GCA of size  $s_1\times\cdot\cdot\cdot\times s_r\times s_{r+1}\times s_{r+2}$. P,Q,X,Y,G,H are defined with ~\eqref{eqthm2}.
\begin{equation*}
\begin{aligned}
P &= \frac{1}{4}\bigl[ A + B + (B^{*} - A^{*}) \bigr],\
Q = \frac{1}{4}\bigl[ A + B - (B^{*} - A^{*}) \bigr],\\
X &= P \otimes C + Q \otimes D,\
Y = Q^{*} \otimes C - P^{*} \otimes D,\\
G &= X \otimes E + Y \otimes F,\
H = Y^{*} \otimes E - X^{*} \otimes F.
\end{aligned}
\end{equation*}
Then the following hold:\\  
T1. Reversal on the k-th dimension of (G,H), where $k\in 1,2,...r$, is equivalent to Reversal on the k-th dimension of (C,D); Reversal on the (r+1)th dimension of (G,H) is equivalent to Reversal on (A,B); Reversal on the (r+2)th dimension of (G,H) is equivalent to Reversal on (E,F).\\
E1(k,G,H)=M(A,B,E1(k,C,D),E,F), E1(r+1,G,H)=M(E1(A,B),C,D,E,F), E1(r+2,G,H)=M(A,B,C,D,E1(E,F))\\
T2. Positional Scaling on the kth dimension of (G,H), where $k\in 1,2,...r$, is equivalent to Positional Scaling on the kth dimension of (C,D); Positional Scaling on the (r+2)-th dimension of (G,H) is equivalent to Positional Scaling on (E,F)\\
E5(k,G,H)=(G1,H1)=M(A,B,E5(k,C,D),E,F), E5(r+2,G,H)=(G1,H1)=M(A,B,C,D,E5(E,F)),\\
T3. Swap (G,H): (H,G)=(G1,H1)=M($A^*,-B^*,D^*,C^*,E,-F$)\\
T4. Conjugate Reverse H: (G,$H^*$)$\simeq$(G1,H1)=M(A,$B^*$,E,F,C,D), where $\simeq$ means the matrix is equivalent after reordering\\
T5. Scale A: (G1,H1)=M(-A,B,-D,-C,F,E)=(G,-H)\\
T6. Swap (A,B): (G1,H1)=M(B,A,D,C,E,-F)=(G,-H)\\
T7. Reverse both A,B: (G1,H1)=M($A^*,B^*$,D,-C,-F,E)=(G,H)\\
T8. 
Positional Scaling (A,B): (G1,H1)=M(E5(A,B),D,C,E,F)=$E5^{H/2}(r+1,G,-H)$\\
T9. Scale C,D,E,F: Let (G2,H2) = M ($A,B,\xi^{k_1}C,\xi^{k_2}D,\xi^{k_3}E,\xi^{k_4}F$), (G1,H1) = M ($A,B,C,\xi^{k_2-k_1}D,E,\xi^{k_4-k_3}F$), \\we have G2 = $\xi^{k_1+k_3}$ G1, H2 = $\xi^{k_3-k_1}$ H1\\   
T10. Scale C,D,E,F for length 2 (A,B): When (A,B) is of length 2, we may assume A=(1,1), B=(1,-1), then\\(G1,H1)=M(A,B,C,D,E,$\xi^kF$)=$E5^k$(r+1,G,H), (G2,H2)=M(A,B,C,$\xi^kD$,E,F)=$E5^{-k}(r+1,\xi^kG,H)$
\end{proposition}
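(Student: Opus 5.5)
The plan is to verify each identity T1--T10 directly from the defining formulas \eqref{eqthm2}, using three algebraic rules for the Kronecker product of arrays. First, reverse conjugation distributes: $(U\otimes V)^*=U^*\otimes V^*$. Second, scalars pass through: $(\alpha U)\otimes V=\alpha(U\otimes V)=U\otimes(\alpha V)$. Third, a dimension-wise operation on a factor carries over to the same dimension of the product. Here reversal or positional scaling on one dimension acts only on the factor supported in that dimension, since all other factors have size $1$ there. I would also record how $P,Q$ transform under the operations on $(A,B)$, in a small table. Swapping $A$ and $B$ sends $(P,Q)\mapsto(Q,P)$. Reversing both sends $(P,Q)\mapsto(Q^*,P^*)$. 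Replacing $A$ by $-A$ sends $(P,Q)\mapsto(-Q^*,-P^*)$. The pair $(A,B^*)$ gives $P'=\frac14[A+B^*+B-A^*]=P$ and $Q'=\frac14[A+B^*-B+A^*]$. Each of these is a one-line check, and Lemma~\ref{lemma1} is available when disjointness of supports is needed.

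With this table, the dimension-wise items T1, T2, T8 and T10 follow by pushing the operation onto the relevant factor. For T1 with $k\le r$, reversal of $G$ on dimension $k$ acts only on $C,D$, and similarly on $X^*,Y^*$ inside $H$. For dimension $r+1$, reversal of $(A,B)$ reverses $P,Q,P^*,Q^*$ consistently. For dimension $r+2$ it reverses $E,F$. For the positional scaling items I would use $E5(U\otimes V)=U\otimes E5(V)$ when the scaled dimension lies in $V$. One caveat is that reverse conjugation turns positional scaling $\xi^{i}$ into a multiple of $\xi^{-i}$ up to a constant phase. This is why T8 shows $E5^{H/2}$, which is a sign pattern $(-1)^{i}$ and is self-conjugate up to sign, and why T10 has $E5^{-k}$ together with an extra factor $\xi^k$. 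For T10, with $A=(1,1)$ and $B=(1,-1)$, I would compute $P=(1/2)(\ldots)$ explicitly: in fact $P=(1,0)$ and $Q=(0,1)$ (up to orientation), so $X=(C,D)$ and $Y=(D^*,-C^*)$ stacked. Scaling $F$ by $\xi^k$ then multiplies the second slice by $\xi^k$, which is exactly $E5^k$ on dimension $r+1$.

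The algebraic items T3--T7 and T9 come from direct substitution. For T9, expanding gives $G_2=\xi^{k_1+k_3}(P\otimes C+\xi^{k_2-k_1}Q\otimes D)\otimes E+\cdots$. Conjugation flips the signs of exponents on the starred terms, and this yields $H_2=\xi^{k_3-k_1}H_1$ after factoring. For T3, I would substitute $(A^*,-B^*)$ to get the new $P,Q$ from the table, then $X_1=\pm Y^*$ and $Y_1=\pm X^*$. This gives $G_1=Y^*\otimes E-X^*\otimes F=H$, and symmetrically $H_1=G$. T5, T6 and T7 are analogous sign bookkeeping. T4 is different in kind. Taking $H^*=Y\otimes E^*-X\otimes F^*$ and substituting $X,Y$ expands $G$ and $H^*$ into sums of triple products $P\otimes C\otimes E$ and so on. Commuting the Kronecker factors is a permutation of coordinates, which is the "$\simeq$ after reordering". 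After this, the roles of $(C,D)$ and $(E,F)$ are exchanged, and $B$ is replaced by $B^*$, whose effect is to interchange $Q$ and $Q^*$-type terms.

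I expect the main obstacle to be T4 and the sign conventions in T3 and T8. In these cases the identity holds only up to reordering of dimensions, or up to the constant phases produced when conjugating a positionally scaled array. Here I would expand everything into the eight triple products and match them term by term, using Lemma~\ref{lemma1} to justify that any coefficient mismatches occur only on the zero supports.
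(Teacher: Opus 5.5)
Your route (direct substitution, unfolding along dimension $r+1$) is the same as the paper's, and T1, T4, T6 and T9 go through as you describe them. However, several of the concrete computations you commit to are wrong, and they are exactly the ones carrying T3, T5, T7, T8 and T10.

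\emph{The $P,Q$ table.} Reversing both sequences gives $P_1=\frac{1}{4}[A^*+B^*+(B-A)]=P^*$ and $Q_1=Q^*$. So the map is $(P,Q)\mapsto(P^*,Q^*)$, not $(Q^*,P^*)$; your own T1 sentence (``reverses $P,Q,P^*,Q^*$ consistently'') needs the former. Replacing $A$ by $-A$ gives $P_1=\frac{1}{4}[-A+B+B^*+A^*]=P^*$ and $Q_1=-Q^*$, not $(-Q^*,-P^*)$. With your entries, both T5 and T7 come out false:
\begin{itemize}
\item in T5 you would get $X_1=Q^*\otimes D+P^*\otimes C\neq Y$;
\item in T7 you would get $X_1=Q^*\otimes D-P^*\otimes C$ and $Y_1=P\otimes D+Q\otimes C$, so $C$ and $D$ trade places.
\end{itemize}
T3, which you read off the same table, inherits the error. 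The correct data are:
\begin{itemize}
\item T3: $(A^*,-B^*)$ gives $(-P,Q)$, hence $X_1=Y^*$ and $Y_1=X^*$;
\item T5: $(-A,B)$ gives $(P^*,-Q^*)$, hence $X_1=Y$ and $Y_1=X$;
\item T7: $(A^*,B^*)$ gives $(P^*,Q^*)$, hence $X_1=-Y$ and $Y_1=X$.
\end{itemize}

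\emph{T10.} For $A=(1,1)$, $B=(1,-1)$ one has $A+B=(2,0)$ and $B^*-A^*=(-2,0)$, so $P=(0,0)$ and $Q=(1,0)$. Your $P=(1,0)$, $Q=(0,1)$ contradicts Lemma~\ref{lemma1} in either orientation, since it puts two nonzero entries at each index. The vanishing of $P$ is the whole mechanism: $G=(D\otimes E\,|\,C\otimes F)$ and $H=(C^*\otimes E\,|\,-D^*\otimes F)$, so each slice along dimension $r+1$ carries only $E$ or only $F$. With your $X=(C|D)$ and $Y=(D^*|-C^*)$, the second slice of $G$ would be $D\otimes E-C^*\otimes F$, and scaling $F$ would not be a positional scaling. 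In the second identity, the $\xi^k$ and $E5^{-k}$ come from two facts: $D$ sits in slice $1$ of $G$, and $(\xi^kD)^*=\xi^{-k}D^*$ sits in slice $2$ of $H$. That is, they come from conjugating a scalar.

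\emph{Your caveat on reverse conjugation.} Reverse conjugation does not flip the direction of positional scaling: for length $s$, $(E5\,U)^*=\xi^{-(s-1)}E5(U^*)$. The constant phase you anticipated does appear, but in T2 with $k\le r$. Since $C,D$ enter $H$ only conjugated, $M(A,B,E5(k,C,D),E,F)=(E5(k,G),\xi^{-(s_k-1)}E5(k,H))$, so T2 holds only up to an E4 scaling of $H$.

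\emph{T8.} T8 is not obtained by pushing $E5$ onto a factor, because $(A,B)$ enter only through $P,Q,P^*,Q^*$. Positional scaling by $-1$, with $s_{r+1}$ even, gives
\[
P\mapsto(-1)^{i-1}Q,\quad Q\mapsto(-1)^{i-1}P,\quad P^*\mapsto-(-1)^{i-1}Q^*,\quad Q^*\mapsto-(-1)^{i-1}P^*.
\]
This interchange is why T8's input lists $D,C$ in swapped order. It yields $X_1=E5^{H/2}(r+1,X)$ and $Y_1=E5^{H/2}(r+1,Y)$. Then $H_1=-E5^{H/2}(r+1,H)$, because $(E5^{H/2}U)^*=(-1)^{s_{r+1}-1}E5^{H/2}(U^*)$ along that axis. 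The evenness of $s_{r+1}$, which your sketch never invokes, is needed in both of these steps.
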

\begin{proof}
T1. We unfold (G,H) on the (r+1)th dimension,\\
G(:,...,:,i,:)=$P[i]\cdot C\otimes E+Q[i]\cdot D\otimes E+Q^*[i]\cdot C\otimes F-P^*[i]\cdot D\otimes F$\\
H(:,...,:,i,:)=$Q[i]\cdot C^*\otimes E-P[i]\cdot D^*\otimes E-P^*[i]\cdot C^*\otimes F-Q^*[i]\cdot D^*\otimes F$\\
Since P and Q is not trivial only on the (r+1)-th dimension and E and F is not trivial only on the (r+2)-th dimension, C,D is trivial on (r+1)-th and (r+2)-th dimension, we have\\
E1(k,G,H)=M(A,B,E1(k,C,D),E,F),E1(r+2,G,H)=M(A,B,C,D,E1(E,F))\\
Let $A1=A^*,B1=B^*$,$P1=\frac{1}{4}\bigl[A1+B1 + (B1^*-A1^*) \bigr]=P^*,Q1=\frac{1}{4}\bigl[A1+B1 - (B1^*-A1^*) \bigr]=Q^*$\\
Since lemma \ref{lemma1},$P[i],Q[i],P^*[i],Q^*[i]$ consists of three 0 and one $\pm1$, \\
we denote (G1,H1)=E1(r+1,G,H), we unfold (G1,H1) on the (r+1)th dimension,\\
G1(:,...,:,i,:)=G(:,...,:,$s_{r+1}+1-i$,:)=$P[s_{r+1}+1-i]\cdot C\otimes E+Q[s_{r+1}+1-i]\cdot D\otimes E+Q^*[s_{r+1}+1-i]\cdot C\otimes F-P^*[s_{r+1}+1-i]\cdot D\otimes F$=$P1[i]\cdot C\otimes E+Q1[i]\cdot D\otimes E+Q1^*[i]\cdot C\otimes F-P1^*[i]\cdot D\otimes F$,\\
similarly, H1(:,...,:,i,:)=$Q1[i]\cdot C^*\otimes E-P1[i]\cdot D^*\otimes E-P1^*[i]\cdot C^*\otimes F-Q1^*[i]\cdot D^*\otimes F$\\
compare (G1,H1) and (G,H), we have E1(r+1,G,H)=M(E1(A,B),C,D,E,F).\\
T2. The prove is the similer to that of T1.\\
T3. (G1,H1)=M($A^*,-B^*,D^*,C^*,E,-F$),then we have\\
$P1 = \frac{1}{4}\bigl[ A^* - B^* + (-B+A) \bigr]=-P,\\
Q1 = \frac{1}{4}\bigl[ A^* - B^* - (-B+A) \bigr]==Q,\\  
X1=P1\otimes(D^*)+Q1\otimes(C^*)=-P\otimes D^*+Q\otimes C^*=Y^*,\\
Y1=Q1^*\otimes(D^*)-P1^*\otimes(C^*)=Q^*\otimes D^*+P^*\otimes C^*=X^*,\\
G1=X1\otimes E+Y1\otimes (-F)=Y^*\otimes E-X^*\otimes F=H,\\
H1=Y1^*\otimes E-X1^*\otimes (-F)=X^*\otimes E+Y^*\otimes F=G.$\\
T4. $P1 = \frac{1}{4}\bigl[ A+B^* + (B-A^*) \bigr]=P,\\
Q1 = \frac{1}{4}\bigl[ A+ B^* - (B-A^*) \bigr]==Q^*,$\\  
We unfold (G,H) on the (r+1)th dimension,\\
G(:,...,:,i,:)=$P[i]\cdot C\otimes E+Q[i]\cdot D\otimes E+Q^*[i]\cdot C\otimes F-P^*[i]\cdot D\otimes F$\\
H(:,...,:,i,:)=$Q[i]\cdot C^*\otimes E-P[i]\cdot D^*\otimes E-P^*[i]\cdot C^*\otimes F-Q^*[i]\cdot D^*\otimes F$\\
then $H^*(:,...,:,i,:)$=$Q^*[i]\cdot C\otimes E^*-P^*[i]\cdot D\otimes E^*-P[i]\cdot C\otimes F^*-Q[i]\cdot D\otimes F^*$\\
For (G1,H1)=M(A,$B^*$,E,F,C,D), 
$P1 = \frac{1}{4}\bigl[ A + B^* + (B-A^*) \bigr]=P,\\
Q1 = \frac{1}{4}\bigl[ A + B^* - (B-A^*) \bigr]=Q^*,\\
X1=P1\otimes E+Q1\otimes F=P\otimes E+Q^*\otimes F,\\
Y1=Q1^*\otimes E-P1^*\otimes F=Q\otimes E+P^*\otimes F,\\
G1=X1\otimes C+Y1\otimes D,\\
H1=Y1^*\otimes C-X1^*\otimes D.$\\
we also unfold (G1,H1) on the (r+1)th dimension, \\
G1(:,...,:,i,:)=$P[i]\cdot E\otimes C+Q[i]\cdot F\otimes C+Q^*[i]\cdot E\otimes D-P^*[i]\cdot F\otimes D$\\
H1(:,...,:,i,:)=$Q^*[i]\cdot E^*\otimes C-P[i]\cdot F^*\otimes C-P^*[i]\cdot E^*\otimes D-Q[i]\cdot F^*\otimes D$\\
Compare (G,$H^*$) and (G1,H1), we have (G,$H^*$)$\simeq$(G1,H1).\\
T5. (G1,H1)=(-A,B,-D,-C,F,E),then we have\\
$P1 = \frac{1}{4}\bigl[ -A + B + (B^{*} + A^{*}) \bigr]=\frac{1}{4}\bigl[ A^{*} + B^{*} + (B- A) \bigr]=P^*,\\
Q1 = \frac{1}{4}\bigl[ -A + B - (B^{*} + A^{*}) \bigr]=-\frac{1}{4}\bigl[ A^{*} + B^{*} - (B- A) \bigr]=-Q*,\\  
X1=P1\otimes(-D)+Q1\otimes(-C)=-P^*\otimes D+Q^*\otimes C=Y,\\
Y1==Q1^*\otimes(-D)-P1^*\otimes(-C)=Q\otimes D+P\otimes C=X,\\
G1=X1\otimes F+Y1\otimes E=Y\otimes F+X\otimes E=G,\\
H1=Y1^*\otimes F-X1^*\otimes E=X^*\otimes F-Y^*\otimes E=-H.$\\
T6. (G1,H1)=M(B,A,D,C,E,-F),then we have\\
$P1=\frac{1}{4}\bigl[ B + A + (A^{*} - B^{*}) \bigr]=Q,\\
Q1=\frac{1}{4}\bigl[ B + A - (A^{*} - B^{*}) \bigr]=P,\\
X1=P1\otimes D+Q1\otimes C=Q\otimes D+P\otimes C=X,\\
Y1=Q1^*\otimes D-P1^*\otimes C=P^*\otimes D-Q^*\otimes C=-Y,\\
G1=X1\otimes E+Y1\otimes (-F)=X\otimes E+Y\otimes F=G,\\
H1=Y1^*\otimes E-X1^*\otimes (-F)=-Y*\otimes E+X^*\otimes F=-H.$\\
T7. (G1,H1)=M($A^*, B^*$,D,-C,-F,E),then we have\\
$P1=\frac{1}{4}\bigl[ A^* + B^* + (B - A) \bigr]=P^*,\\
Q1=\frac{1}{4}\bigl[ A^* + B^* - (B - A) \bigr]=Q^*,\\
X1=P1\otimes D+Q1\otimes(-C)=P^*\otimes D-Q^*\otimes C=-Y,\\
Y1==Q1^*\otimes D-P1^*\otimes(-C)=Q\otimes D+P\otimes C=X,\\
G1=X1\otimes (-F)+Y1\otimes E=Y\otimes F+X\otimes E=G,\\
H1=Y1^*\otimes (-F)-X1^*\otimes E=-X^*\otimes F+Y^*\otimes E=H.$\\
T8. (A,B) is a nontrivial 2-phase GCS, $s_{r+1}$ is an even integer.
When (A1,B1)=E5(A,B), we have \\A1[i]=$(-1)^{i-1}$A[i], B1[i]=$(-1)^{i-1}$B[i], \\$A1^*[i]=(-1)^{s_{r+1}-i}A[s_{r+1}+1-i]$=$(-1)^iA^*[i]$, \\$B1^*[i]=(-1)^{s_{r+1}-i}B[s_{r+1}+1-i]$=$(-1)^iB^*[i]$,\\
$P1[i]=\frac{1}{4}\bigl[ A1[i] + B1[i] + (B1^{*}[i] - A1^{*}[i]) \bigr]=\frac{1}{4}(-1)^{i-1}\bigl[ A1[i] + B1[i] - (B1^{*}[i] - A1^{*}[i])\bigr]=(-1)^{i-1}Q[i],$\\
$Q1[i]=(-1)^{i-1}P[i],\\P1^*[i]=P1[s_{r+1}+1-i]=(-1)^{s_{r+1}-i}Q[s_{r+1}+1-i]=-(-1)^{i-1}Q^*[i],\\Q1^*[i]=-(-1)^{i-1}P^*[i],\\$
we unfold (G1,H1)=M(A1,B1,D,C,E,F) on the (r+1)th dimension,\\
G1(:,...,:,i,:)=$(-1)^{i-1}(Q[i]\cdot D\otimes E+P[i]\cdot C\otimes E-P^*[i]\cdot D\otimes F+Q^*[i]\cdot C\otimes F)$\\
H1(:,...,:,i,:)=$(-1)^{i-1}(P[i]\cdot D^*\otimes E-Q[i]\cdot C^*\otimes E+Q^*[i]\cdot D^*\otimes F+P^*[i]\cdot C^*\otimes F)$\\
we also unfold (G,H)=M(A,B,C,D,E,F) on the (r+1)th dimension,\\
G(:,...,:,i,:)=$P[i]\cdot C\otimes E+Q[i]\cdot D\otimes E+Q^*[i]\cdot C\otimes F-P^*[i]\cdot D\otimes F$\\
H(:,...,:,i,:)=$Q[i]\cdot C^*\otimes E-P[i]\cdot D^*\otimes E-P^*[i]\cdot C^*\otimes F-Q^*[i]\cdot D^*\otimes F$\\
Compare (G1,H1) and (G,H), we have (G1,H1)=$E5^{H/2}(r+1,G,-H)$\\
T9.(G1,H1) = M ($A,B,C,\xi^{k_2-k_1}D,E,\xi^{k_4-k_3}F$), (G2,H2) = M ($A,B,\xi^{k_1}C,\xi^{k_2}D,\xi^{k_3}E,\xi^{k_4}F$), then\\
X2=$P\otimes (\xi^{k_1}C)+Q\otimes (\xi^{k_2}D)$=$\xi^{k_1}$X1,\\
Y2=$Q^*\otimes (\xi^{k_1}C)-P^*\otimes (\xi^{k_2}D)$=$\xi^{k_1}$Y1,\\
G2=$(\xi^{k_1}X1)\otimes(\xi^{k_3}E)+(\xi^{k_1}Y1)\otimes(\xi^{k_4}F)$=$\xi^{k_1+k_3}(X1\otimes E+Y1\otimes (\xi^{k_4-k_3})F)$=$\xi^{k_1+k_3}$G1,\\
H2=$(\xi^{k_1}Y1)^*\otimes(\xi^{k_3}E)-(\xi^{k_1}X1)^*\otimes(\xi^{k_4}F)$=$\xi^{k_3-k_1}(X1\otimes E+Y1\otimes (\xi^{k_4-k_3})F)$=$\xi^{k_3-k_1}$H1,\\
T10. There are 8 distinct length 2 2-phase GCS, which can be obtained from A=[1,1] and B=[1,-1] by scale A,B respectively and swap (A,B), due to the previous discussion, scale and swap the 2-phase GCS do not generate additional GCA, therefore we only need to discuss one case.\\
When A=[1,1] and B=[1,-1], we have P=[0,0] and Q=[1,0], then\\
G=$(D\otimes E)|(C\otimes F)$, H=$(C^*\otimes E)|(-D^*\otimes F)$, where $|$ denotes concatenating two arrays in the (r+1)th dimension.\\
G1=$(D\otimes E)|(C\otimes (\xi^kF))$=$(D\otimes E)|\xi^k(C\otimes F)$,\\
H1=$(C^*\otimes E)|(-D^*\otimes (\xi^kF))$=$(C^*\otimes E)|\xi^k(-D^*\otimes F)$,\\
(G1,H1)=$E5^k$(r+1,G,H).\\
G2=$((\xi^kD)\otimes E)|(C\otimes F)$=$\xi^k((D\otimes E)|\xi^{-k}(C\otimes F))$,\\
H2=$(C^*\otimes E)|(-(\xi^kD)^*\otimes F)$=$(C^*\otimes E)|\xi^{-k}(-D^*\otimes F)$,\\(G2,H2)=$E5^{-k}(r+1,\xi^kG,H)$
\end{proof}

By Proposition ~\ref{proposition3}, we can can summarize the remaining equivalence operations needed to avoid duplicated GCAs in the construction of Theorem~\ref{Theorem:3}.

\begin{proposition}\label{proposition4}
For the construction of Theorem \ref{Theorem:3}, the remaining equivalence operations are as follows:\\
1. For the r nontrivial 2-phase GCSs and r+1 H-phase GCSs, consider all permutations of 2-phase GCSs and H-phase GCSs separately as entries of Theorem \ref{Theorem:3}.\\
2. For each H-phase entry, apply conjugate reverse to either sequence, reverse both sequences, or swap the two sequences, thereby generating at most 16 pairs. For each 2-phase entries, reverse one of the sequences to generate at most 2 pairs. Some of these pairs may coincide, which will be discussed in the next section.\\
3. At each step of the construction using Corollary \ref{corollary:1}, if the 2-phase GCS is not of length 2, scale D and F with $\xi^k,k\in(0,1,...,H-1)$ respectively. This generates $H^2$ additional GCAs each time.\\
4. After the construction in Theorem \ref{Theorem:3}, scale each array in $(G,H)$ by $\xi^k$, $k\in\{0,1,\ldots,H-1\}$, and apply positional scaling on each dimension. This generates $H^{2r+3}$ additional GCAs.
\end{proposition}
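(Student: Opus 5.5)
The argument is a bookkeeping one. Start from the full set of equivalence operations available in the construction of Theorem~\ref{Theorem:3}: Proposition~\ref{proposition1} on every input GCS, Proposition~\ref{proposition2} on every intermediate GCA $(C,D)$, and Proposition~\ref{proposition2} on every output $(G,H)$ of Corollary~\ref{corollary:1}. Then use Proposition~\ref{proposition3} to discard each operation whose effect already appears in the four items of the statement. The proof goes by induction on the number $r$ of applications of Corollary~\ref{corollary:1}. The inductive claim is that every GCA obtainable at step $r$ by any sequence of equivalence operations arises from the list in items 1--4, with operations on the last output moved to the end (item 4).

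\textbf{Operations on an output $(G,H)$.} Handle these one at a time.
\begin{itemize}
\item Reversal E1 on dimension $k\le r$, $r+1$ or $r+2$. By T1 it equals a reversal of $(C,D)$, of $(A,B)$, or of $(E,F)$. By the induction hypothesis, the reversal of $(C,D)$ pushes down to reversals of input sequences, which item 2 already covers.
\item Positional scaling on dimensions $k\le r$ or $r+2$. By T2 it passes to the inputs. Since positional scaling commutes with the later Kronecker constructions, it can equally be postponed to the final array, which is item 4.
\item Swap of $(G,H)$. By T3 it is a construction with modified inputs: reversals, conjugate reversals and sign changes of the inputs, together with a swap of $C,D$. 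These are covered by item 2 and, for the signs, by item 3 or 4.
\item Conjugate reverse of $H$. By T4 it amounts, up to reordering of coordinates, to exchanging the roles of $(C,D)$ and $(E,F)$ together with reversing $B$. This is why item 1 permutes the H-phase GCSs and item 2 reverses one sequence of a 2-phase pair.
\item Scaling of $G$ or $H$. Commuting it past later steps via T9 moves it to the end, giving item 4.
\end{itemize}

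\textbf{Operations on the 2-phase inputs.} For each input pair $(A,B)$, T5 and T6 show that scaling $A$ and swapping $A,B$ only produce $(G,-H)$. T7 shows that reversing both sequences changes nothing. T8 shows that positional scaling of $(A,B)$ is a positional scaling of the output on dimension $r+1$, combined with a swap of $C,D$ already counted. What remains is reversing a single sequence, giving the at most 2 pairs of item 2.

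\textbf{Operations on the H-phase inputs and scalings.} For the H-phase inputs, the dihedral-type group generated by conjugate reverse of either sequence, joint reversal and swap gives the 16 pairs of item 2. Scalars on $C,D,E,F$ are handled by T9: only the relative scalars $\xi^{k_2-k_1}$ on $D$ and $\xi^{k_4-k_3}$ on $F$ survive, since the common factors become output scalings (item 4). That yields the $H^2$ choices of item 3. When the 2-phase GCS has length 2, T10 turns even these relative scalings into positional scaling of dimension $r+1$ and scaling of $G$, both absorbed into item 4. This explains the exclusion in item 3. Finally, item 4 counts the survivors: scaling $G$ and $H$ independently gives $H^2$, and positional scaling on each of the $2r+1$ dimensions gives $H^{2r+1}$, for a total of $H^{2r+3}$.

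\textbf{Main obstacle.} The delicate step is commuting operations performed at an intermediate stage, on $(G,H)$ before it is fed back in as $(C,D)$, past all later applications of Corollary~\ref{corollary:1}. This is needed so that they can be collected into items 2 and 4. I would prove a small commutation lemma: scaling and positional scaling of $(C,D)$ on a dimension $k\le r$ yield the same scaling or positional scaling of the next output. This follows from the $X,Y$ formulas and T2 and T9. Swap and conjugate reverse do not commute in this way; by T3 and T4 they translate into input modifications, so they must be re-expressed rather than postponed. The statement says "remaining", that is, it claims sufficiency and not independence, so only this covering direction needs to be shown.
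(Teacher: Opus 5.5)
Your proposal follows the paper's proof essentially step for step. Output operations are pushed to the inputs with T1--T4, operations on the 2-phase pair are discarded with T5--T8, and scalings are reduced to relative scalings of $D$ and $F$ with T9--T10; your explicit induction over the $r$ applications of Corollary~\ref{corollary:1} is a more careful version of the paper's closing paragraph.

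Two readings need correcting, though neither breaks the argument:
\begin{itemize}
\item T5--T7 do not say that these operations ``change nothing'' or ``only negate $H$''. Each comes with a signed swap of $(C,D)$ and a swap or sign change of $(E,F)$.
\item Scaling one array of an intermediate $(C,D)$ does not commute to the same output scaling. By T9 it becomes a relative scaling of $D$ plus output scalings.
\end{itemize}
Both are harmless because those swaps and scalings are themselves covered: by item 2 (via T3 when $(C,D)$ is intermediate), by item 3, and by T10.
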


\begin{proof}
We separate the construction of Theorem \ref{Theorem:3} into r applications of Corollary \ref{corollary:1}. First, consider one applications of Corollary \ref{corollary:1}, denoted as (G,H)=M(A,B,C,D,E,F).

For equivalent operations on (G,H): By T1, reversal on the k-th dimension of (G,H) is equivalent to reversal on (C,D),(A,B),(E,F) with the corresponding dimension. By T3, swapping (G,H) is equivalent to scaling and conjugate reversing of (A,B,C,D,E,F). By T4 together with T3, conjugate reverse of (G,H) is equivalent to swapping the order of (C,D),(E,F) and reversing (A,B). Therefore, for $(G,H)$, it suffices to consider scaling each array and positional scaling on each dimension.

For equivalent operations with (A,B): By T5-T8, scaling, swapping, reversing both sequences, and positional scaling do not generate additional GCAs. Thus the only remaining operation is reversing one of the two sequences.

For equivalent operations with (C,D) and (E,F): By T9, scaling C,D,E,F is equivalent to scaling D and F. By T10, when (A,B) has length 2, scaling D,F also does not generate additional GCAs. Therefore, for scaling C,D,E,F, it is sufficient to scale $D$ and $F$ by $\xi^k$, $k\in\{0,1,\ldots,H-1\}$, respectively, when $(A,B)$ is not of length 2. By T2, positional scaling on (C,D) and (E,F) is equivalent to positional scaling on (G,H), which has already been considered. For (C,D) and (E,F) , the remaining operations are reversal, swap, and conjugate reversal.

Finally, returning to the construction of Theorem \ref{Theorem:3}, since each GCA (C,D) comes from a previous application of Corollary \ref{corollary:1}, the conclusions above show that it is enough to consider scaling (C,D) and positional scaling. Again, by T2, positional scaling on (C,D) is equivalent to positional scaling on the final (G,H), and by T9 and T10, we only need to scale D when (A,B) is not of length 2.

Thus, all equivalent operations not listed in Proposition~\ref{proposition4} have been excluded.
\end{proof}

In the construction of \cite{Fiedler2008} and \cite{Gibson}, such an equivalent operation is called "affine offset" and corresponds to the fourth operation of  Proposition \ref{proposition4}. The seed pairs in \cite{Gibson} correspond to the second operation in Proposition \ref{proposition4}. As noted above, Theorem~\ref{Theorem:1} is a special case of Theorem~\ref{Theorem:3} in which all 2-phase GCS entries have length 2. In the more general case, three additional operations must be considered:
\begin{enumerate}
    \item a full permutation of the 2-phase GCSs and H-phase GCSs, respectively;
    \item reversal of both H-phase seed pairs;
    \item scaling D and F, respectively, when the 2-phase GCS is not of length 2 during the construction in Corollary~\ref{corollary:1}.
\end{enumerate}

\subsection{Projections from GCA to GCS}

The projections from GCA to GCS follows exactly the same method as in \cite{Fiedler20081}\cite{Fiedler2008} and \cite{Gibson}. The key idea is that an $r$-dimensional GCA can be projected to an $(r-1)$-dimensional GCA, as stated in the following theorem.
\label{theorem4}
\begin{theorem}\cite[Theorem~11]{Jedwab2007}
Suppose that A and B form an r-dimensional Golay array pair over an alphabet S,with $r\ge2$.Then, for distinct $k,l\in \{1,...,r\}$,$\Psi_{k,l}(A)$ and $\Psi_{k,l}(B)$form an (r-1)-dimensional Golay array pair over S.
\end{theorem}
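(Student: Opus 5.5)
The plan is to work with generating functions, since the Golay array property becomes a polynomial identity that survives substitution. I take $\Psi_{k,l}$ to be the map of \cite{Jedwab2007}: it merges dimensions $k$ and $l$ into one dimension of size $s_ks_l$ by sending the index pair $(i_k,i_l)$ to $i_k+s_k i_l$, and leaves the other coordinates unchanged. For an $r$-dimensional array $A$ of size $s_1\times\cdots\times s_r$, define
\[
A(z_1,\dots,z_r)=\sum_{i_1,\dots,i_r}A[i_1,\dots,i_r]\,z_1^{i_1}\cdots z_r^{i_r}.
\]

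The first step is to show that the pair $A,B$ is Golay exactly when
\[
A(z)\overline{A}(z^{-1})+B(z)\overline{B}(z^{-1})=\mathcal{C}_A(0)+\mathcal{C}_B(0)
\]
holds as an identity of Laurent polynomials. Here $\overline{A}$ conjugates the coefficients and $z^{-1}=(z_1^{-1},\dots,z_r^{-1})$. Expanding the product, the coefficient of $z_1^{-u_1}\cdots z_r^{-u_r}$ is $\mathcal{C}_A(u_1,\dots,u_r)+\mathcal{C}_B(u_1,\dots,u_r)$. So the identity holds if and only if every nonzero shift gives zero, which is the Golay condition.

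The second step is to apply the substitution $z_l\mapsto z_k^{s_k}$, which is a ring homomorphism of Laurent polynomial rings. It commutes with $z\mapsto z^{-1}$ and with conjugating coefficients. Applying it to both sides of the identity gives the same identity in $r-1$ variables, and the right-hand side is still a constant.

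The third step, which I expect to be the only point needing care, is to check that substituting into $A(z)$ gives exactly the generating function of $\Psi_{k,l}(A)$. The substituted monomial is $z_k^{i_k+s_ki_l}$. Since $0\le i_k<s_k$, the map $(i_k,i_l)\mapsto i_k+s_ki_l$ is a bijection from $\{0,\dots,s_k-1\}\times\{0,\dots,s_l-1\}$ onto $\{0,\dots,s_ks_l-1\}$. Hence no two coefficients merge, every entry of the projected array is an entry of $A$, and the projected array still has entries in $S$.

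Finally, we read off the conclusion. By the first step applied in dimension $r-1$, the projected identity says that $\Psi_{k,l}(A)$ and $\Psi_{k,l}(B)$ form an $(r-1)$-dimensional Golay array pair over $S$.
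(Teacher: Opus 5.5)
Your proof is correct. The paper gives no proof of its own and only cites Jedwab--Parker; your argument is essentially the standard generating-function proof from that source: the Golay property is equivalent to $A(z)\overline{A}(z^{-1})+B(z)\overline{B}(z^{-1})$ being constant, one variable is replaced by a power of another, and the bijection $(i_k,i_l)\mapsto i_k+s_ki_l$ identifies the result with the generating function of $\Psi_{k,l}(A)$. Your convention for which dimension is absorbed into the other may be the reverse of Jedwab--Parker's, but this is immaterial because the statement covers all ordered pairs of distinct $k,l$.
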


We apply Theorem~\ref{theorem4} successively until an r-dimensional array is projected to a 1-dimensional sequence. In order to keep track of the projection mappings, we represent the array indices by vertices $1,\ldots,r$ of a directed graph and represent successive projection mappings by arcs between vertices. The projected array corresponding to a given graph does not depend on the order in which arcs are added\cite[Proposition~2]{Fiedler2008}. In particular, the sequence obtained after applying r-1 successive projection mappings to an r-dimensional array is completely described by a directed path of the form\\
$\overset{\sigma(1)}{\bullet} \to\overset{\sigma(2)}{\bullet}\to\cdot\cdot\cdot\to\overset{\sigma(r)}{\bullet}$\\
for some permutation $\sigma$ of $\{1,\ldots,r\}$.
Therefore, the total number of distinct projections of an r-dimensional Golay is $r!$. 

\section{Applications of the new construction}
In this section, we first describe the seed pairs used in the construction. Using the five 2-phase seeds of lengths 2,10,10,20,26 and five 4-phase seeds of lengths 3,5,8,11,13, we can construct all known 2-phase and 4-phase Golay sequences in a uniform manner. For lengths not covered by computer search, we also obtain lower bounds on the numbers of 2-phase GCSs of lengths $2^a10^b26^c$ and 4-phase GCSs of lengths $2^{a+u}3^b5^c11^d13^e$.
\subsection{Primitive seed pairs}
\label{subsec1}
Primitive seed pairs are GCSs that cannot be generated from shorter GCSs. Although other GCSs could also be used as entries of Theorem~\ref{Theorem:3}, they are already generated from the primitive seeds by definition and therefore do not yield additional GCSs. Moreover, using nonprimitive seeds typically reduces the number of dimensions in the resulting GCAs, which in turn reduces the number of GCSs obtained after equivalence operations and projections. For this reason, we restrict attention to primitive seed pairs.

In our construction, we use the five 2-phase GCS seeds of length 2,10,10,20,26 from \cite{Borwein} and the five 4-phase GCS seeds of length 3,5,8,11,13 from \cite{Gibson} and \cite{Fiedler2008}, together with the trivial seeds of length 1. For the 2-phase seed, we use length 2 instead of length 1 as in \cite{Borwein} to satisfy the requirements of Theorem~\ref{Theorem:3}. The seeds are listed below:
\paragraph{2-phase primitive seed pairs}
\begin{equation*}
\begin{split}
&A_2=[1,1],B_2=[1,-1], \\
&A_{10-1}=[1, 1, -1, 1, -1, 1, -1, -1, 1, 1],B_{10-1}=[1, 1, -1, 1,  1, 1,  1,  1, -1, -1],\\
&A_{10-2}=[1, 1,  1, 1,  1, -1,  1, -1, -1, 1],B_{10-2}=[1, 1, -1,-1,  1,  1,  1, -1,  1, -1],\\
&A_{20}=[1,1,1,1,-1,1,-1,-1,-1,1,1,-1,-1,1,1,-1,1,-1,-1,1],\\
&B_{20}=[1,1,1,1,-1,1,1,1,1,1,-1,-1,-1,1,-1,1,-1,1,1,-1],\\
&A_{26}=[1, 1, 1, 1, -1, 1, 1, -1, -1, 1, -1, 1, -1, 1, -1, -1, 1, -1, 1, 1, 1, -1, -1, 1, 1, 1],\\
&B_{26}=[1, 1, 1, 1, -1, 1, 1, -1, -1, 1, -1, 1, 1, 1, 1, 1, -1, 1, -1, -1, -1, 1, 1, -1, -1, -1],\\
\end{split}
\end{equation*}

\paragraph{4-phase primitive seed pairs}
\begin{equation*}
\begin{split}
&A_3=[1,1,-1],B_3=[1,i,1], \\
&A_5=[1,1,1,-i,i],B_5=[1,i,-1,1,-i],\\
&A_8=[1,1,1,-1,1,1,-1,1],B_8=[1,i,i,-1,1,-i,-i,-1],\\
&A_{11}=[1,1,1,i,-1,1,i,-i,i,1,-1],B_{11}=[1,i,-1,-1,-1,i,i,1,-i,i,1],\\
&A_{13}=[1,1,1,i,-1,1,1,-i,1,-1,1,-i,i],B_{13}=[1,i,-1,-1,-1,i,-1,1,1,-i,-1,1,-i],\\
\end{split}
\end{equation*}

We use the primitive seed pairs($A_s,B_s$) to define the set of ordered sequence pairs. We also observe the following equivalences on some of the seeds:

\begin{proposition}\label{proposition5}
The following equivalences hold:\\
1.Conjugate Reverse of length 3 seed:\\
($A_3^*,B_3$)=$E5(-B_3,A_3$)\\
($A_3,B_3^*$)=$E5^3(B_3,A_3$)\\
($A_3^*,B_3^*$)=$E5^2(A_3,B_3$)\\
2.Conjugate Reverse of length 2 seed:\\
($A_2,B_2^*$)=$(A_2,-B_2$)\\
3.Reverse both sequence of all the 4-phase seeds:\\
E1($A_3,B_3$)=($A_3^*,B_3$)\\
E1($A_5,B_5$)=$E5^3(B_5^*,A_5^*)$\\
E1($A_8,B_8$)=($A_8^*,-B_8$)\\
E1($A_{11},B_{11}$)=$E5^3(-B_{11}^*,-A_{11}^*)$\\
E1($A_{13},B_{13}$)=$E5^3(B_{13}^*,A_{13}^*)$\\
\end{proposition}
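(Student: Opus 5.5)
Every identity in the statement is a finite equality between explicit sequences of length at most $13$. The plan is therefore to verify each one entrywise, after fixing the conventions precisely. Throughout I take $H=4$ and $\xi=i$ for the 4-phase seeds, so that $E5^q$ multiplies the $j$-th entry ($j=1,\dots,n$) of both sequences by $i^{q(j-1)}$, as in Proposition~\ref{proposition1}. The operator $^*$ denotes reverse conjugation, $A^*[j]=\overline{A[n+1-j]}$. For a sequence, E1 is plain reversal without conjugation. Hence
\[
\mathrm{E1}(A,B)=\bigl(\overline{A^*},\overline{B^*}\bigr).
\]
This observation organises part 3: each claimed identity there is equivalent to a statement about the conjugated seeds, which can be read off directly.

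\textbf{Part 1 (length 3).} I will compute both sides. For example,
\[
A_3^*=[-1,1,1], \qquad -B_3=[-1,-i,-1].
\]
Multiplying $-B_3$ by $(1,i,-1)$ entrywise gives $[-1,1,1]$, so the first components agree. Similarly, $A_3$ times $(1,i,-1)$ gives $[1,i,1]=B_3$, which settles the first identity. The second identity uses $E5^3$, i.e.\ multipliers $(1,-i,-1)$, together with $B_3^*=[1,-i,1]$. The third uses $E5^2$, i.e.\ multipliers $(1,-1,1)$, which for length $3$ is a real sign pattern.

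\textbf{Part 2 (length 2).} Since $B_2=[1,-1]$ is real, $B_2^*=[-1,1]=-B_2$, so this identity is immediate.

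\textbf{Part 3 (reversal of both sequences).} For each seed $s\in\{3,5,8,11,13\}$ I will write down the reversed sequences. I then compare them with the indicated right-hand side, obtained by swapping, conjugate-reversing, negating if required, and multiplying the $j$-th entry by $i^{q(j-1)}$ with $q$ as stated.
\begin{itemize}
\item For $s=3$: reversing $A_3$ gives $[-1,1,1]=A_3^*$ because $A_3$ is real, and reversing $B_3$ gives $[1,i,1]=B_3$.
\item For $s=8$: $A_8$ is real and palindromic up to the stated sign conventions, so reversing it gives $A_8^*$. For $B_8$ one checks $\mathrm{rev}(B_8)=-B_8^*$, i.e.\ that $B_8$ is anti-self-conjugate under reversal.
\item For $s=5,11,13$: the check is that $\mathrm{rev}(A_s)[j]=\pm i^{3(j-1)}\,\overline{B_s[n+1-j]}$, and the analogous relation with $A_s$ and $B_s$ interchanged. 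This amounts to comparing two explicit lists of length $5$, $11$ or $13$.
\end{itemize}

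The main obstacle is purely bookkeeping rather than conceptual. The reversal used in E1 carries no conjugation, whereas $^*$ does. Positional scaling starts at exponent $0$ at index $1$, and the sign and swap in the right-hand sides must be matched exactly. A single off-by-one in the exponent turns $E5^3$ into $E5$ and breaks the identity. To keep this under control, for the long seeds $11$ and $13$ I will tabulate $j$, $A_s[n+1-j]$, $\overline{B_s[n+1-j]}$ and $i^{3(j-1)}=(-i)^{j-1}$ side by side. I will also use that $(-i)^{j-1}$ is periodic with period $4$, so only the residue of $j$ modulo $4$ matters. Should any single identity fail under this convention, I would first test the alternative exponent convention $i^{qj}$, which differs only by a global scalar and is therefore harmless up to E4.
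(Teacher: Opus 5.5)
Your method (entrywise comparison with $\xi=i$, $E5^q$ multiplying the $j$-th entry by $i^{q(j-1)}$, and $\mathrm{E1}(A,B)=(\overline{A^*},\overline{B^*})$) is exactly what the paper means when it says the proposition follows directly from the definitions. With these conventions the following all hold literally:
\begin{itemize}
\item the first two length-3 identities;
\item part 2;
\item part 3 for $s=3,5,11,13$.
\end{itemize}
Your criterion $\mathrm{rev}(A_s)[j]=\pm i^{3(j-1)}\overline{B_s[n+1-j]}$, together with its swap, is the right one for those seeds.

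Two of your specific claims fail, however. For $s=8$ you propose to check $\mathrm{rev}(B_8)=-B_8^*$, and this is false. Indeed $-B_8^*=\overline{B_8}$ starts with $1$, while $\mathrm{rev}(B_8)=[-1,-i,-i,1,-1,i,i,1]$ starts with $-1$. The statement requires $\mathrm{rev}(B_8)=-B_8$, i.e.\ $B_8$ is anti-palindromic, and that does hold. For $A_8$ only realness is used; it is not palindromic in any sense.

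The more serious problem is the third length-3 identity, whose computation you did not carry out. It is false as written. With multipliers $(1,-1,1)$ one gets $E5^2(A_3,B_3)=([1,-1,-1],[1,-i,1])=(-A_3^*,B_3^*)$, since $A_3^*=[-1,1,1]$.

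Your fallback does not rescue this. Under the convention $i^{qj}$ the multipliers are $(-1,1,-1)$, giving $(A_3^*,-B_3^*)$. The two components are off by different signs, so no common scalar repairs it. Composing the first two identities confirms this: they give $A_3^*=-E5^2(A_3)$ and $B_3^*=E5^2(B_3)$.

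The correct statement is $(A_3^*,B_3^*)=E5^2(-A_3,B_3)$, which is equality only up to an E4 sign change on one sequence. This is harmless for the paper's purpose of removing duplicate seed pairs. There, such scalings are absorbed by T9 and item~4 of Proposition~\ref{proposition4}, and the first identity already involves $-B_3$. Still, a verification proof must record the sign discrepancy and prove the corrected identity rather than assert the literal one.
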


Proposition~\ref{proposition5} follows directly from the definitions of the equivalence operations. By Proposition~\ref{proposition4}, each 4-phase seed yields at most 16 equivalent pairs, while each 2-phase seed yields at most 2. Proposition~\ref{proposition5} eliminates the duplicated cases. The resulting distinct seed pairs are

\begin{equation*}
P_S=\left\{\begin{matrix}
 \left \{ (A_S,B_S),(B_S,A_S) \right \}&for \quad s=3 \\
 \left \{ (A_S,B_S),(A_S,B_S^*),(A_S^*,B_S),(A_S^*,B_S^*), 
(B_S,A_S),(B_S,A_S^*),(B_S^*,A_S),(B_S^*,A_S^*) \right \}&for \quad s=5,8,11,13 \\
 \left \{ (A_S,B_S) \right \}&for \quad s=2 \\
 \left \{ (A_{S-1},B_{S-1}),(A_{S-1},B^*_{S-1}),(A_{S-2},B_{S-2}),(A_{S-2},B^*_{S-2}) \right \}&for \quad s=10\\
 \left \{ (A_S,B_S),(A_S,B^*_S) \right \}&for \quad s=20,26\\
\end{matrix}\right. 
\end{equation*}

\subsection{2-phase case}
All known 2-phase GCSs can be constructed from the 5 2-phase primitive seed pairs.
We first consider the lengths of the form $n=2^a10^b20^c26^d$.
\begin{lemma}
For a length n written as $2^a10^b20^c26^d$, the construction yields
\begin{equation}\label{equation:5}
N=\frac{((a+b+c+d)!)^2}{a!b!c!d!}2^{a+5b+4c+4d+2}
\end{equation}
2-phase GCSs.
\end{lemma}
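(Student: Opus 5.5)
The count in \eqref{equation:5} comes from applying Theorem~\ref{Theorem:3} with $H=2$ and multiplying the number of independent choices left by Proposition~\ref{proposition4}. Put $r=a+b+c+d$. For the length $n=2^a10^b20^c26^d$ I will use $r$ nontrivial 2-phase seeds: $a$ of length 2, $b$ of length 10, $c$ of length 20 and $d$ of length 26. For the $r+1$ H-phase entries I will use the trivial seed of length 1, since in the 2-phase setting no odd-length nontrivial seeds exist. Theorem~\ref{Theorem:3} then gives a GCA of size $1\times s_1\times 1\times\cdots\times s_r\times 1$. It has only $r$ nontrivial dimensions and is effectively an $r$-dimensional array whose entries multiply to $n$.

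I will count the remaining operations of Proposition~\ref{proposition4} in order.

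\emph{Step 1 (orderings).} By item 1, the 2-phase seeds may be ordered arbitrarily. Seeds of the same length are treated as one type, so there are $r!/(a!b!c!d!)$ orderings. Here the two length-10 seeds count as a single type, and the choice between them is absorbed into $P_{10}$. Permuting the trivial length-1 seeds gives nothing new.

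\emph{Step 2 (seed choices).} By item 2 and the reduced set $P_S$ from Proposition~\ref{proposition5}, each length-2 seed has $1$ choice, each length-10 seed has $4$, and each length-20 or length-26 seed has $2$. This contributes $2^{2b+c+d}$. For the length-1 seeds, swapping, reversal and conjugate reversal act trivially, except that a sign change is already covered by scaling.

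\emph{Step 3 (scaling $D$, $F$).} By item 3, each application of Corollary~\ref{corollary:1} whose 2-phase seed has length other than 2 gives $H^2=4$ choices, for a factor $4^{b+c+d}$. T10 shows that the length-2 steps give nothing extra.

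\emph{Step 4 (final scaling and positional scaling).} By item 4, scaling $G$ and $H$ independently gives $2^2$. Positional scaling is nontrivial only on the $r$ dimensions of size greater than 1, giving $2^r$; on a size-1 dimension, $E5$ is the identity.

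\emph{Step 5 (projections).} The final step is projection to sequences. The length-1 dimensions contribute nothing, so the distinct projections correspond to the $r!$ directed paths through the $r$ nontrivial dimensions, as described after Theorem~\ref{theorem4}.

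Multiplying these factors gives
\[
\frac{r!}{a!b!c!d!}\cdot r!\cdot 2^{2b+c+d}\cdot 2^{2(b+c+d)}\cdot 2^{2+r}
=\frac{(r!)^2}{a!b!c!d!}\,2^{a+5b+4c+4d+2}.
\]
The exponent here is $(2b+c+d)+(2b+2c+2d)+2+(a+b+c+d)=a+5b+4c+4d+2$, which matches \eqref{equation:5}.

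The main obstacle is justifying that the factors are genuinely independent, so that distinct choices give distinct sequences. Proposition~\ref{proposition3} and Proposition~\ref{proposition5} only remove the coincidences that are \emph{known}. The trivial dimensions need particular care. I must check that scaling the length-1 pairs $D$, $F$ is not already absorbed into the global scaling of item 4 or into positional scaling on an adjacent dimension. I must also check that no two different orderings of the seeds collapse after projection.

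My first approach is to rely on Proposition~\ref{proposition4}, reading the lemma as a count of the construction's outputs. For small cases I would then sanity-check the formula against the Borwein--Ferguson counts. For example, $a=1$ gives $2^3=8$, and $b=1$ gives $2^7=128$, which matches the number of length-10 pairs. This check would confirm that the factors do not overcount.
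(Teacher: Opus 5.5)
Your proposal is correct and matches the paper's proof step for step. It uses the same $r=a+b+c+d$ nontrivial 2-phase seeds together with $r+1$ trivial length-1 seeds, and the same factors $r!/(a!b!c!d!)$, $2^{2b+c+d}\cdot 4^{b+c+d}$, $2^{r+2}$ and $r!$. The independence/distinctness concern you raise is not resolved in the paper either, which likewise reads the lemma as a count of the construction's outputs.
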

\begin{proof}
We use a+b+c+d 2-phase seeds and a+b+c+d+1 trivial seeds of length 1 as the inputs of~\ref{Theorem:3}. Here, the nontrivial seeds have lengths $2,10,20,$ and $26$, with multiplicities $a,b,c,$ and $d$, respectively.
The construction proceeds in four steps.\\
\emph{Step 1: Seed permutation.}
Permute the nontrivial seeds. The number of distinct permutations is
\[
\frac{(a+b+c+d)!}{a!b!c!d!}.
\]

\emph{Step 2: Seed choices.}
For each permutation, choose the seed pairs from Section~\ref{subsec1} and build an $(a+b+c+d)$-dimensional GCA. The number of choices is the product of:
\begin{itemize}
    \item the seed-pair choices from Section~\ref{subsec1}, contributing $1^a4^b2^c2^d$;
    \item the independent scalings of $D$ and $F$ in Corollary~\ref{corollary:1} for the seeds of lengths $10,20,$ and $26$, contributing $(2\times2)^{b+c+d}$.
\end{itemize}

\emph{Step 3: Equivalence operations.}
For each resulting pair $(G,H)$, apply $\mathrm{E4}$ to $G$ and $H$, and apply $\mathrm{E5}$ on each dimension. This gives $2^{a+b+c+d+2}$ GCAs for each choice in Step 2.

\emph{Step 4: Projection.}
Project each GCA to a 1-dimensional GCS. By successive projections, this yields $(a+b+c+d)!$ GCSs.

Multiplying the contributions of the four steps gives \eqref{equation:5}.
\end{proof}

\begin{theorem}\label{theorem5}
For length $n=2^a10^b26^c$, where $a,b,c\ge0$, the number of 2-phase GCSs constructed is
\begin{equation}
   N= \sum_{k=0}^{min(a,b)} \frac{((a+b+c-k)!)^2}{(a-k)!(b-k)!k!c!}2^{a+5b+4c-2k+2} 
\end{equation}
\end{theorem}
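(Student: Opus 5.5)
The plan is to reduce Theorem~\ref{theorem5} to the preceding lemma, by enumerating every way the length $n=2^a10^b26^c$ can be written as a product of primitive 2-phase seed lengths. The seeds have lengths $2,10,10,20,26$, and $20=2\cdot10$. A factorization of $n$ into seed lengths is therefore determined by the number $k$ of length-20 seeds used. Each length-20 seed absorbs one factor $2$ and one factor $10$, so $0\le k\le\min(a,b)$. The remaining multiplicities are $a-k$ seeds of length $2$, $b-k$ of length $10$, $k$ of length $20$ and $c$ of length $26$. Hence $n=2^{a-k}10^{b-k}20^{k}26^{c}$, and this decomposition is unique for each $k$, because $26$ involves the prime $13$, which cannot be produced any other way.

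For fixed $k$, I substitute $(a,b,c,d)\mapsto(a-k,b-k,k,c)$ into \eqref{equation:5}. The total number of seeds is $a+b+c-k$. The power of two becomes
\[
(a-k)+5(b-k)+4k+4c+2=a+5b+4c-2k+2,
\]
which is exactly the $k$-th summand. Summing over $k$ then gives the formula.

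The step I expect to be the real obstacle is justifying that the total count is a sum, that is, that GCSs produced from different $k$ (different multisets of seed lengths) are distinct. I would argue this from the way the lemma itself counts: the construction is defined by its input data (multiset, permutation, seed-pair choice, scalings, offsets, projection), and ``number constructed'' counts these outputs. The families for different $k$ differ in the dimension of the intermediate GCA and in their seed content. I would therefore assert that the families are counted separately and add, in the same sense in which the lemma's factors multiply. If a genuine disjointness statement were required, I would try to recover the multiset of seed lengths from the resulting sequence. Failing that, I would state the count as the number of constructions, which is what the lemma's count means as well.

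A minor point to check is the degenerate case $a=b=c=0$. Here only $k=0$ contributes, and the summand equals $4$ (the length-1 trivial pairs with scalings). This is consistent with the lemma's convention.
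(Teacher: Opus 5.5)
Your proposal matches the paper's proof. The paper likewise writes $n=2^{a-k}10^{b-k}20^{k}26^{c}$ for $0\le k\le\min(a,b)$, applies the preceding lemma with parameters $(a-k,b-k,k,c)$, and sums over $k$; your exponent check $(a-k)+5(b-k)+4k+4c+2=a+5b+4c-2k+2$ is exactly the verification needed. The paper never addresses whether the families for different $k$ overlap and simply adds the counts of constructions, as you propose, so your remarks on the uniqueness of the factorization and on disjointness go beyond the paper's argument rather than fall short of it.
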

\begin{proof}
Write $n=2^a10^b26^c=2^{a-k}10^{b-k}20^{k}26^c$, where $0\le k\le min(a,b)$. For each admissible k, apply \eqref{equation:5} with parameters $(a-k,b-k,k,c)$. Summing up all k gives the stated formula.
\end{proof}

When we set b=c=0 in Theorem 5, then for length $n=2^a$,we have $N=a!2^{a+2}$, which agrees with \cite[Theorem 4.5]{Borwein}. When we set one of b,c,d in lemma 2 to 1 and the others to 0, then for length $n=2^ak$, where k is one of the seeds of length 10,10,20,26, we have $N=(a+1)(a+1)!2^{a+6}$, which agrees with \cite[Theorem 4.6]{Borwein}. The construction of Theorem 5 covers all the known 2-phase GCS lengths. Table~\ref{tab:table1} lists the number of 2-phase GCSs constructed with Theorem 5 for $n\le5000$.

\begin{table}[!t]
\caption{Number of 2-phase GCSs of length $\le5000$\label{tab:table1}}
\centering
\begin{tabular}{|c|c||c|c||c|c||c|c|}
\hline
length & \# of pairs & length & \# of pairs & length & \# of pairs& length & \# of pairs\\
\hline
1 & 4 & 80 & 102912 & 512 & 743178240& 1664& 144506880\\
\hline
2 & 8 & 100 & 8192 & 520 & 151552&  2000& 26345472\\
\hline
4 & 32 & 104 & 4608 & 640 & 297861120& 2048&326998425600\\
\hline
8 & 192 & 128 & 2580480 & 676 & 2048& 2080 & 40501248\\
\hline
10 & 128 & 160 & 1277952 & 800 & 41717760& 2560&109660078080\\
\hline
16 & 1536 & 200 & 155648 & 832 & 8847360&2600&1179648\\
\hline
20 & 1088 & 208 & 49152 & 1000 & 786432&2704&589824\\
\hline
26 & 64 & 256 & 41287680 & 1024 & 14863564800&3200&14590279680\\
\hline
32 & 15360 & 260 & 8192 & 1040 & 2433024&3328&2642411520\\
\hline
40 & 9728 & 320 & 18309120 & 1280 & 5429329920&4000&667484160\\
\hline
52 & 512 & 400 & 2508800 & 1352&36864& 4096&7847962214400\\
\hline
64 & 184320 & 416 & 614400 & 1600&747700224&4160&727449600\\
\hline
\end{tabular}
\end{table}

\subsection{4-phase case}
\begin{lemma}
\label{lemma2}
For a length n written as $2^{k_2}10^{k_{10}}20^{k_{20}}26^{k_{26}}3^{k_3}5^{k_5}8^{k_8}11^{k_{11}}13^{k_{13}}$, the construction yields $N$ 4-phase GCSs, where
\begin{equation}
\label{eq4}
\begin{aligned}
  & N= \frac{S_1!}{k_2!k_{10}!k_{20}!k_{26}!}\frac{(S_1+1)!}{k_3!k_5!k_8!k_{11}!k_{13}!(S_1+1-S_2)!}2^{2k_2+8k_{10}+7k_{20}+7k_{26}+3k_3+5k_5+5k_8+5k_{11}+5k_{13}+4}(S_1+S_2)!\\
\end{aligned}
\end{equation}
with
\[
S_1=k_2+k_{10}+k_{20}+k_{26},\qquad
S_2=k_3+k_5+k_8+k_{11}+k_{13}.
\]
\end{lemma}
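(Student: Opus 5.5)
The plan mirrors the proof of the 2-phase counting lemma, now with $H=4$. As inputs of Theorem~\ref{Theorem:3} I take $r=S_1$ nontrivial 2-phase seeds and $r+1=S_1+1$ slots for 4-phase seeds. The 2-phase seeds have lengths $2,10,20,26$ with multiplicities $k_2,k_{10},k_{20},k_{26}$; each is also a 4-phase GCS, so Corollary~\ref{corollary:1} applies. The 4-phase slots are filled by the $S_2$ nontrivial 4-phase seeds of lengths $3,5,8,11,13$ together with $S_1+1-S_2$ trivial seeds of length $1$. This needs $S_2\le S_1+1$, which is exactly the range where the factor $(S_1+1-S_2)!$ makes sense. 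The count is then the product of four independent factors, following the four steps of Proposition~\ref{proposition4}.

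\emph{Step 1: permutations.} By item 1 of Proposition~\ref{proposition4}, the 2-phase seeds and the 4-phase slots are permuted separately. This gives $\frac{S_1!}{k_2!k_{10}!k_{20}!k_{26}!}$ orderings of the 2-phase seeds. For the 4-phase slots it gives $\frac{(S_1+1)!}{k_3!k_5!k_8!k_{11}!k_{13}!(S_1+1-S_2)!}$ orderings, where the trivial seeds are treated as one repeated symbol.

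\emph{Step 2: seed choices and inner scalings.} Using the sets $P_S$ of Section~\ref{subsec1}, the 2-phase seeds contribute $1^{k_2}4^{k_{10}}2^{k_{20}}2^{k_{26}}$. The 4-phase seeds contribute $2^{k_3}8^{k_5+k_8+k_{11}+k_{13}}$. The trivial length-1 seeds contribute nothing, since any variant of them is only a scaling, which is already accounted for. By item 3 of Proposition~\ref{proposition4}, every application of Corollary~\ref{corollary:1} with a 2-phase seed of length $\ne2$ allows independent scalings of $D$ and $F$ by powers of $i$. This contributes $(4\cdot4)^{k_{10}+k_{20}+k_{26}}$.

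\emph{Step 3: outer equivalences.} By item 4 of Proposition~\ref{proposition4}, scaling $G$ and $H$ gives $4^2=2^4$. Positional scaling along each dimension gives a factor $4$ per nontrivial dimension. Dimensions of size $1$ give nothing, so this step contributes $4^{S_1+S_2}$.

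\emph{Step 4: projection.} Deleting the size-$1$ dimensions leaves an $(S_1+S_2)$-dimensional array. By Theorem~\ref{theorem4} and the path description of projections, this array yields $(S_1+S_2)!$ sequences.

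Multiplying the four contributions, the exponent of $2$ per seed type is:
\begin{itemize}
\item $k_{10}$: $2+4+2=8$;
\item $k_{20}$ and $k_{26}$: $1+4+2=7$;
\item $k_2$: $0+0+2=2$;
\item $k_3$: $1+2=3$;
\item $k_5,k_8,k_{11},k_{13}$: $3+2=5$;
\item constant: $4$.
\end{itemize}
This reproduces \eqref{eq4}.

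The main obstacle is justifying that the trivial length-1 slots behave correctly. Two things must hold: placing a length-1 seed as $(C,D)$ or $(E,F)$ must not create extra distinct arrays beyond what is counted, and it must not break the reduction in Proposition~\ref{proposition3}. For instance, T10 should make the scalings of $D,F$ redundant when the 2-phase seed has length 2. A length-1 $E,F$ in a non-length-2 step, however, still allows $F$ to be scaled, and this should be checked directly via the unfolding used in T1 and T10. I would first verify these degenerate cases explicitly and then rely on Propositions~\ref{proposition4} and \ref{proposition5} to argue that the remaining choices give the stated count, as in the 2-phase lemma.
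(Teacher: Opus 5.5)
Your proposal follows the paper's proof step for step, with the same four factors: separate permutations of the 2-phase seeds and of the 4-phase slots padded with $S_1+1-S_2$ trivial seeds, the $P_S$ seed choices times $(4\cdot4)^{k_{10}+k_{20}+k_{26}}$ inner scalings of $D$ and $F$, the $4^{S_1+S_2+2}$ outer scalings, and $(S_1+S_2)!$ projections, and your exponent bookkeeping matches. The paper does not treat the degenerate length-$1$ slots you flag at the end separately either; it simply multiplies these factors as you do.
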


\begin{proof}
We use $S_1+1$ 2-phase GCSs and $S_1+1$ 4-phase GCSs as inputs to Theorem~\ref{Theorem:3}. The 2-phase inputs consist of $k_2,k_{10},k_{20},k_{26}$ seeds of lengths $2,10,20,$ and $26$, respectively. The 4-phase inputs consist of $k_3,k_5,k_8,k_{11},k_{13}$ seeds of lengths $3,5,8,11,$ and $13$, together with $S_1+1-S_2$ trivial seeds of length $1$.

The construction proceeds in four steps:

\emph{Step 1: Seed permutation.}
Permute the 2-phase seeds and the 4-phase seeds independently. The number of such permutations is
\[
\frac{S_1!}{k_2!k_{10}!k_{20}!k_{26}!}
\frac{(S_1+1)!}{k_3!k_5!k_8!k_{11}!k_{13}!(S_1+1-S_2)!}.
\]

\emph{Step 2: Seed-pair choices.}
For each permutation, choose the seed pairs from Section~\ref{subsec1} to form an $(S_1+S_2)$-dimensional GCA. The number of choices has two contributions:
\begin{itemize}
    \item the Cartesian product of the seed entries, contributing
    \[
    4^{k_{10}}2^{k_{20}}2^{k_{26}}2^{k_3}8^{k_5}8^{k_8}8^{k_{11}}8^{k_{13}};
    \]
    \item for each 2-phase seed of length $10$, $20$, or $26$, the independent choices of scaling $D$ and $F$ in Corollary~\ref{corollary:1} by $i^k$, $k\in\{0,1,2,3\}$, contributing
    \[
    (4\times 4)^{k_{10}+k_{20}+k_{26}}.
    \]
\end{itemize}

\emph{Step 3: Equivalence operations.}
For each resulting pair $(G,H)$, apply $\mathrm{E4}$ to $G$ and $H$, and apply $\mathrm{E5}$ on each dimension. This gives $4^{S1+S2+2}$ GCAs for each choice in Step 2.

\emph{Step 4: Projection.}
Project each GCA in Step 3 to a 1-dimensional GCS. This yields $(S_1+S_2)!$ GCSs.

Multiplying the contributions of the four steps gives \eqref{eq4}.
\end{proof}

\begin{theorem}
\label{theorem6}
For length $n=2^{a+u}3^b5^c11^d13^e$, where integer $a,b,c,d,e,u\ge0$, $b+c+d+e\le a+2u+1,u\le c+e$ the construction yields $N$ 4-phase Golay complementary sequence pairs, where
\begin{equation}
\label{eq60}
\begin{aligned}
   N=&\sum_{k_{10}=0}^{c}\sum_{k_{20}=0}^{c-k_{10}}\sum_{k_{26}=0}^{e}\sum_{k_{8}=0}^{min(M1,M2)} \frac{(a+u-3k_8-k_{20})!}{(a+u-3k_8-k_{10}-2k_{20}-k_{26})!k_{10}!k_{20}!k_{26}!}\\&\cdot\frac{(a+u-3k_8-k_{20}+1)!}{b!(c-k_{10}-k_{20})!k_8!d!(e-k_{26})!(a+u+k_{10}+k_{26}+1-b-c-d-e-4k_8)!}\\&\cdot2^{2a+2u+3b+5c+5d+5e-k_8+k_{10}-2k_{20}+4}(a+u+b+c+d+e-2k_8-k_{10}-2k_{20}-k_{26})!\\
\end{aligned}  
\end{equation}
where
\[
M_1=\left\lfloor \frac{a+u-b-c-d-e+1+k_{10}+k_{26}}{4}\right\rfloor,\qquad
M_2=\left\lfloor \frac{a+u-k_{10}-2k_{20}-k_{26}}{3}\right\rfloor.
\]
\end{theorem}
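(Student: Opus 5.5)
The plan mirrors the proof of Theorem~\ref{theorem5}: enumerate every factorization of $n=2^{a+u}3^b5^c11^d13^e$ into primitive seed lengths that is admissible for Lemma~\ref{lemma2}, apply \eqref{eq4} to each, and sum.

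\emph{Parametrizing the factorizations.} The odd primes $3$ and $11$ occur only in the 4-phase seeds of lengths $3$ and $11$, so $k_3=b$ and $k_{11}=d$. The prime $5$ occurs in the seeds of lengths $5$, $10$ and $20$, so $k_5=c-k_{10}-k_{20}$. The prime $13$ occurs in the seeds of lengths $13$ and $26$, so $k_{13}=e-k_{26}$. The power of $2$ is distributed among the 2-phase seeds $2,10,20,26$ and the 4-phase seed $8$, which gives
\[
k_2=a+u-3k_8-k_{10}-2k_{20}-k_{26}.
\]
This yields
\[
S_1=a+u-3k_8-k_{20},\qquad S_2=b+c+d+e-k_{10}-k_{20}-k_{26}+k_8 .
\]
I will then check the algebra in \eqref{eq4}:
\begin{itemize}
\item $S_1+1-S_2=a+u+k_{10}+k_{26}+1-b-c-d-e-4k_8$, which is the last factorial in the second denominator;
\item $S_1+S_2=a+u+b+c+d+e-2k_8-k_{10}-2k_{20}-k_{26}$, which is the projection factor;
\item the exponent $2k_2+8k_{10}+7k_{20}+7k_{26}+3b+5(c-k_{10}-k_{20})+5k_8+5d+5(e-k_{26})+4$ collapses to $2a+2u+3b+5c+5d+5e-k_8+k_{10}-2k_{20}+4$. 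Here $2(-3k_8)+5k_8=-k_8$, and $2(-k_{10})+8k_{10}-5k_{10}=k_{10}$, and similarly for $k_{20}$ and $k_{26}$.
\end{itemize}
Substituting into \eqref{eq4} reproduces the summand of \eqref{eq60} term by term.

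\emph{Summation range.} The constraints are $k_{10},k_{20}\ge0$ with $k_{10}+k_{20}\le c$, $0\le k_{26}\le e$, $k_2\ge0$, and $S_1+1-S_2\ge0$. The last is needed because there must be enough trivial 4-phase slots: Theorem~\ref{Theorem:3} takes exactly $S_1+1$ H-phase inputs. These two conditions give $k_8\le M_2$ and $k_8\le M_1$ respectively. The outer bounds $k_{10}\le c$, $k_{20}\le c-k_{10}$ and $k_{26}\le e$ are exactly the stated ranges. Some terms inside these outer ranges may still violate $k_2\ge0$, because $M_2$ can then be negative and the inner sum is empty. I would adopt the convention that an empty sum is zero, so those terms vanish.

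\emph{Distinctness and nonemptiness.} Different tuples $(k_{10},k_{20},k_{26},k_8)$ correspond to different multisets of seeds. I would argue, as the paper implicitly does for Theorem~\ref{theorem5}, that they are counted as separate constructions, so the contributions add. Finally I would note that the hypotheses $b+c+d+e\le a+2u+1$ and $u\le c+e$ guarantee at least one admissible tuple. Take $k_8=0$ and choose $k_{10}+k_{26}=u$, using $u\le c+e$. Then $k_2=a\ge0$, and $S_1+1-S_2=a+2u+1-(b+c+d+e)\ge0$. So the construction realizes every length in the stated family.

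The main obstacle is the bookkeeping of the exponent and the factorial arguments, together with correctly tying the bounds $M_1,M_2$ to the nonnegativity conditions. Everything else is a direct invocation of Lemma~\ref{lemma2}.
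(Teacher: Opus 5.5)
Your proposal is correct and follows the paper's proof: take $(k_{10},k_{20},k_{26},k_8)$ as free parameters, solve for the remaining multiplicities, get the bounds $M_2$ from $k_2\ge 0$ and $M_1$ from $S_1+1-S_2\ge 0$, and substitute into Lemma~\ref{lemma2}. Your explicit check of the exponent and factorial arguments, the empty-sum convention, and the nonemptiness argument (take $k_8=k_{20}=0$, $k_{10}+k_{26}=u$, using $u\le c+e$ and $b+c+d+e\le a+2u+1$) are small additions that the paper leaves implicit.
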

\begin{proof}
For the given length n, we enumerate all admissible values of $k_2,k_{10},k_{20},k_{26},k_3,k_5,k_8,k_{11},k_{13}$. For each admissible choice, Lemma~\ref{lemma2} gives the number of generated GCSs, and the total is obtained by summing over all such choices.

The constraint of $k_2,k_{10},k_{20},k_{26},k_3,k_5,k_8,k_{11},k_{13}$ for the given n is:
\begin{equation}
\label{eq61}
\begin{aligned}
&a+u=k_2+3k_8+k_{10}+2k_{20}+k_{26}\\
&b=k_3\\
&c=k_5+k_{10}+k_{20}\\
&d=k_{11}\\
&e=k_{13}+k_{26}\\
&k_2+k_{10}+k_{20}+k_{26}+1\ge k_3+k_5+k_8+k_{11}+k_{13}\\
&k_2,k_{10},k_{20},k_{26},k_3,k_5,k_8,k_{11},k_{13}\ge0
\end{aligned}   
\end{equation}

Using $k_{10},k_{20},k_{26},k_8$ as free variables, together with the given a,u,b,c,d,e, we have 

\begin{equation}
\label{eq62}
\begin{aligned}
k_2=a+u-3k_8-k_{10}-2k_{20}-k_{26},\qquad 
k_3=b,\qquad k_5=c-k_{10}-k_{20},\qquad k_{11}=d,\qquad k_{13}=e-k_{26},
\end{aligned}   
\end{equation}

Substituting \eqref{eq62} into \eqref{eq61} gives
\begin{equation}
\label{eq63}
\begin{aligned}
&a+u-3k_8-k_{10}-2k_{20}-k_{26}\ge0\\
&c-k_{10}-k_{20}\ge0\\
&e-k_{26}\ge0\\
&k_{10},k_{20},k_{26},k_{8}\ge0\\
&a+u-3k_8-k_{10}-2k_{20}-k_{26}+k_{10}+k_{20}+k_{26}+1\ge b+c-k_{10}-k_{20}+k_8+d+e-k_{26}
\end{aligned}   
\end{equation}

Hence, we have the range for $k_{10},k_{20},k_{26},k_{8}$:
\begin{equation}
\label{eq64}
\begin{aligned}
&0\le k_{10}\le c\\
&0\le k_{20}\le c-k_{10}\\
&0\le k_{26}\le e\\
&0\le k_8\le \frac{1}{3}(a+u-k_{10}-2k_{20}-k_{26})\\
&k_8\le \frac{1}{4}(a+u-b-c-d-e+1+k_{10}+k_{26})
\end{aligned}   
\end{equation}

For each $k_{10},k_{20},k_{26},k_{8}$ satisfying \eqref{eq64}, we use the corresponding values of $k_2,k_3,k_5,k_{11},k_{13}$ from \eqref{eq62} in Lemma~\ref{lemma2}. Summing over all admissible choices yields \eqref{eq60}.
\end{proof}

Theorem~\ref{theorem6} recovers several known results as special cases. 

If $k_i=0$ for all $i\neq 2$ in Lemma~\ref{lemma2}, then
\[
N=2^{2k_2+4}(k_2)!,
\]

which is the standard 4-phase Golay sequence count in \cite{Fiedler2006,Fiedler20081}. 

If $a+u=k$ and $b=c=d=e=0$ in Theorem~\ref{theorem6}, then 
\[
N=\sum_{k_8=0}^{min(\left \lfloor (k+1)/4 \right \rfloor, \left \lfloor k/3 \right \rfloor)} \frac{(k-3k_8+1)!}{k_8!(k+1-4k_8)!}2^{2k-k_8+4}(k-2k_8)! 
\]

which reduces to \cite[Corollary~16]{Fiedler2008}. 

If $k_i=0$ for all $i\neq 2,s$ and $k_2=m$, $k_s=c$ in Lemma~\ref{lemma2}, then for $n=2^ms^c,\qquad s\in\{3,5,11,13\}$,

we obtain
\[
N=\frac{(m+1)!}{c!(m+1-c)!}2^{2m+tc+4}(m+c)!,
\]

where $t=3$ for $s=3$ and $t=5$ for $s\in\{5,11,13\}$, in agreement with \cite[Proposition~9]{Gibson}. 

Theorem~\ref{theorem6} covers all known possible lengths of 4-phase GCSs, including many new lengths that is not reported before. Table~\ref{tab:table2} lists the numbers obtained from Theorem~\ref{theorem6} for $n\le500$.
\begin{table}[!t]
\caption{Number of 4-phase GCSs of length $\le500$\label{tab:table2}}
\centering
\begin{tabular}{|c|c||c|c||c|c||c|c|}
\hline
length & \# of pairs & length & \# of pairs & length & \# of pairs& length & \# of pairs\\
\hline
1 & 16 & 40 & 4358144 & 130 & 1572864 & 286 & 1048576\\
\hline
2 & 64 & 44 & 147456 & 132 & 9437184 & 288 & 79461089280\\
\hline
3 & 128 & 48 & 19857408 & 144 & 1887436800 & 300 & 2120220672\\
\hline
4 & 512 & 50 & 917504 & 150 & 12582912 & 338 & 655360\\
\hline
5 & 512 & 52 & 180224 & 156 & 11796480 & 320 & 94544855040\\
\hline
6 & 2048 & 60 & 14221312 & 160 & 2963537920 & 330 & 12582912\\
\hline
8 & 6656 & 64 & 50331648 & 176 & 79429632 & 338 & 655360\\
\hline
10 & 12288 & 66 & 196608 & 180 & 340525056 & 352 & 2302672896\\
\hline
11 & 512 & 72 & 47185920 & 192 & 18874368000 & 360 & 27254587392\\
\hline
12 & 36864 & 78 & 262144 & 200 & 1445986304 & 384 & 691556843520\\
\hline
13 & 512 & 80 & 105709568 & 208 & 92274688 & 390 & 18874368\\
\hline
16 & 106496 & 88 & 3145728 & 216 & 1509949440 & 396 & 188743680\\
\hline
18 & 24576 & 90 & 1572864 & 220 & 56885248 & 400 & 53175910400\\
\hline
20 & 215040 & 96 & 575668224 & 234 & 786432 & 416 & 2626682880\\
\hline
22 & 8192 & 100 & 40108032 & 240 & 20220739584 & 432 & 105696460800\\
\hline
24 & 786432 & 104 & 3735552 & 242 & 393216 & 440 & 2123366400\\
\hline
26 & 10240 & 108 & 15728640 & 250 & 25165824 & 468 & 264241152\\
\hline
30 & 327680 & 110 & 1310720 & 256 & 44562382848 & 480 & 820613480448\\
\hline
32 & 2113536 & 120 & 530841600 & 260 & 68419584 & 484 & 18874368\\
\hline
36 & 1179648 & 128 & 1400242176 & 264 & 377487360 & 500 & 4039114752\\
\hline

\end{tabular}
\end{table}

\subsection{A discussion}
Although the main focus of this manuscript is on 2-phase and 4-phase GCSs, the construction method in Section III can also be applied to H-phase GCSs. For example, Theorem~\ref{theorem5} can be extended to the H-phase case by replacing the phase 2 with H.

\begin{corollary}\label{corollary2}
For length $n=2^a10^b26^c$, where $a,b,c\ge0$, with only the 2-phase primitive seed pairs, the construction yields N H-phase golay complementary sequence pairs, where
\begin{equation}
   N= \sum_{k=0}^{min(a,b)} \frac{((a+b+c-k)!)^2}{(a-k)!(b-k)!k!c!}2^{2b-k+c}\cdot H^{a+3b+3c-k+2}  
\end{equation}
\end{corollary}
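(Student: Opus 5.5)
The plan is to repeat the proof of Theorem~\ref{theorem5} with the alphabet size $2$ replaced by $H$ wherever it enters through an equivalence operation. First I prove an $H$-phase version of Lemma~2 for lengths $n=2^a10^b20^c26^d$. Then I resum over the decompositions $2^a10^b26^c=2^{a-k}10^{b-k}20^k26^c$ exactly as before.

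For the generalized lemma I use $S=a+b+c+d$ nontrivial 2-phase seeds of lengths $2,10,20,26$ with multiplicities $a,b,c,d$. Since $\{\pm1\}\subset\{1,\xi,\dots,\xi^{H-1}\}$ for even $H$, each such seed is also an $H$-phase GCS, and so is the length-1 trivial pair. As the $H$-phase inputs of Theorem~\ref{Theorem:3} I take $S+1$ trivial $H$-phase seeds of length $1$. The four steps of Lemma~2 then go as follows.
\begin{enumerate}
\item Seed permutations contribute $\frac{S!}{a!b!c!d!}$, unchanged, since the trivial seeds are identical.
\item The seed-pair choices of Section~\ref{subsec1} contribute $1^a4^b2^c2^d$. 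These come only from the 2-phase seeds, so they do not depend on $H$.
\item By item 3 of Proposition~\ref{proposition4}, the independent scalings of $D$ and $F$ by powers of $\xi$ at each non-length-2 step contribute $H^2$ each. This gives $H^{2(b+c+d)}$, replacing $(2\cdot 2)^{b+c+d}$.
\item By item 4 of Proposition~\ref{proposition4}, applying E4 to $G,H$ and E5 on each of the $S$ nontrivial dimensions contributes $H^{S+2}$.
\item Projection contributes $S!$.
\end{enumerate}
Multiplying,
\[
N=\frac{(S!)^2}{a!b!c!d!}\,2^{2b+c+d}\,H^{2(b+c+d)+a+b+c+d+2}=\frac{(S!)^2}{a!b!c!d!}\,2^{2b+c+d}\,H^{a+3b+3c+3d+2}.
\]
As a sanity check, $H=2$ gives back \eqref{equation:5}.

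Next I substitute the parameters $(a-k,\,b-k,\,k,\,c)$, with $0\le k\le\min(a,b)$. Then $S=a+b+c-k$. The power of $2$ becomes $2(b-k)+k+c=2b-k+c$. The power of $H$ becomes $(a-k)+3(b-k)+3k+3c+2=a+3b+3c-k+2$. Summing over $k$ gives the stated formula. As in Theorem~\ref{theorem5}, I simply add the contributions of the different decompositions.

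The main obstacle is justifying that the counts transfer verbatim to $H$-phase. There are two points to settle:
\begin{itemize}
\item The reductions in Propositions~\ref{proposition3}--\ref{proposition5} must leave exactly these degrees of freedom. T8 uses $\xi^{H/2}=-1$ and even length, and T10 is stated for general $\xi$, so both remain valid for even $H$.
\item The scalings by $\xi^k$ must not collapse further.
\end{itemize}
I would argue as the paper does, taking the counted objects to be those produced by the construction under the listed operations. The $2$-power factors depend only on the 2-phase seed-pair set $P_S$, which is independent of $H$. Only the scaling factors, previously $2$ or $4$ per operation, become $H$.
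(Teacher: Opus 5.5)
Your proposal is correct and is essentially the paper's own argument. The paper justifies the corollary only by saying Theorem~\ref{theorem5} extends ``by replacing the phase 2 with $H$''. You make that explicit: the scaling factors become $H^{2(b+c+d)}$ and $H^{S+2}$, the seed-pair factor $2^{2b+c+d}$ stays fixed, and you then resum over the decompositions $2^{a-k}10^{b-k}20^k26^c$ exactly as in Theorem~\ref{theorem5}.
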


With suitable H-phase primitive seed pairs, a result similar to Theorem~\ref{theorem6} can be obtained. The existence of additional seed pairs arising from larger alphabets is an interesting open question. A better understanding of such seed pairs may lead to further constructions and more options for engineering applications. In addition, if new primitive 2-phase or 4-phase GCSs are discovered in the future, they can be incorporated into the constructions of Theorems~\ref{theorem5} and~\ref{theorem6}.

For comparison, Table~\ref{tab:table3} summarizes the sequence lengths and phases covered by the known constructions. Theorems~\ref{theorem5} and~\ref{theorem6} extend the existing construction methods to all known lengths for 2-phase and 4-phase GCSs in a uniform and extensible framework.
\begin{table}[!t]
\caption{Comparisons of GCSs obtained by different constructions\label{tab:table3}}
\centering
\begin{tabular}{|c|c||c|c||c|c||c|c|}
\hline
Construction & Sequence length & Phases \\
\hline
\cite[Theorem 4.5]{Borwein} &  $2^k$ & 2\\
\hline
\cite[Theorem 4.6]{Borwein} &  $2^kn$,n=10,20,26 & 2\\
\hline
Theorem \ref{theorem5} &  $2^a10^b26^c$ & 2\\
\hline
\cite[Corollary 5]{Davis} &  $2^k$(standard) & 4\\
\hline
\cite[Corollary 16]{Fiedler2008} &  $2^k$ & 4\\
\hline
\cite[Proposition 9]{Fiedler2008} &  $2^ms^c$,s=3,5,11,13 & 4\\
\hline
Theorem \ref{theorem6} &  $2^{a+u}3^b5^c11^d13^e$& 4\\
\hline
Corollary \ref{corollary2} &  $2^a10^b26^c$ & H\\
\hline
\end{tabular}
\end{table}

\section{Conclusion}
In this paper, we propose a general construction for the Golay complementary sequence. The construction extends the existing 3-stage-construction framework by allowing r+1 H-phase GCSs and r 2-phase GCSs as entries in the GCA construction. We also discuss the equivalence operations during the construction and principles to remove duplicated constructions.

As an application, the construction of 2-phase and 4-phase GCSs is discussed. All the known 2-phase and 4-phase GCSs can be generated in a uniform way. If new 2-phase or 4-phase seed pairs are found, they can be incorporated into the current framework. The construction method can also be adapted for H-phase GCSs. Closed-form expressions for the numbers of 2-phase and 4-phase GCSs are given in Theorems~\ref{theorem5} and~\ref{theorem6},  which answers the question left in the last part of \cite{Borwein} and part of the first question in \cite{Fiedler2008}. A MATLAB program for the construction of 2-phase and 4-phase GCSs is shared.

\section*{Acknowledgments}
The work was supported in part by The Startup Foundation for Introducing Talent
of NUIST. We acknowledge the High Performance Computing Center of Nanjing University of Information Science and Technology for their support of this work.

\end{document}